\title{WalkSAT is linear on random 2-SAT}
\thanks{Amin Coja-Oghlan is supported by DFG CO 646/3, DFG CO 646/5 and DFG CO 646/6.
Pavel Zakharov is supported by DFG CO 646/6.
Petra Berenbrink, Colin Cooper, and Thorsten G\"otte are supported by DFG grant no.\ 491453517.
Petra Berenbrink and Lukas Hintze are supported by DFG grant no.\ 411362735.}
\author{Petra Berenbrink, Amin Coja-Oghlan, Colin Cooper, Thorsten G\"otte, Lukas Hintze, Pavel Zakharov}
\address{Petra Berenbrink, {\tt berenbrink@informatik.uni-hamburg.de}, University of Hamburg, Faculty of Mathematics, Informatics and Natural Sciences, Department of Informatics, Vogt-K\"olln-Str.\ 30, 22527 Hamburg, Germany.}
\address{Amin Coja-Oghlan, {\tt amin.coja-oghlan@tu-dortmund.de}, TU Dortmund, Faculty of Computer Science and Faculty of Mathematics, 12 Otto-Hahn-St, Dortmund 44227, Germany.}
\address{Colin Cooper, {\tt colin.cooper@kcl.ac.uk}, Department of Informatics, King’s College, University of London, London WC2B 4BG, UK.}
\address{Lukas Hintze, {\tt lukas.rasmus.hintze@uni-hamburg.de}, University of Hamburg, Faculty of Mathematics, Informatics and Natural Sciences, Department of Informatics, Vogt-K\"olln-Str.\ 30, 22527 Hamburg, Germany.}
\address{Thorsten G\"otte, {\tt thorsten.goette@uni-hamburg.de}, University of Hamburg, Faculty of Mathematics, Informatics and Natural Sciences, Department of Informatics, Vogt-K\"olln-Str.\ 30, 22527 Hamburg, Germany.}
\address{Pavel Zakharov, {\tt pavel.zakharov@tu-dortmund.de}, TU Dortmund, Faculty of Computer Science and Faculty of Mathematics, 12 Otto-Hahn-St, Dortmund 44227, Germany.}
\newcommand\WS{\texttt{WalkSAT}}
\begin{document}

\begin{abstract}
	In an influential article Papadimitriou [FOCS~1991] proved that a local search algorithm called \WS\ finds a satisfying assignment of a satisfiable 2-CNF with $n$ variables in $O(n^2)$ expected time.
	Variants of the \WS\ algorithm have become a mainstay of practical SAT solving (e.g., [Hoos and St\"utzle~2000]).
	In the present article we analyse the expected running time of \WS\ on random 2-SAT instances.
	Answering a question raised by Alekhnovich and Ben-Sasson [SICOMP~2007], we show that \WS\ runs in linear expected time for all clause/variable densities up to the random 2-SAT satisfiability threshold.
\end{abstract}

\maketitle

\section{Introduction}\label{S:intro}

\subsection{Background and motivation}
Since the 1990s random instances of random constraint satisfaction problems (`CSPs') such as $k$-SAT have been considered challenging benchmark instances~\cite{Cheeseman}.
In particular, experiments indicate that the running time of DPLL-like algorithms increases near the {\em satisfiability threshold}, i.e., the critical ratio $m/n$ of clauses to variables up to which a random formula likely remains satisfiable~\cite{MSL}.
These observations led to two research questions: to pinpoint the satisfiability thresholds of random CSPs, and to analyse the performance of algorithms on random problems~\cite{ANP}.

Both of these tasks turned out to be remarkably challenging. 
While the picture remains incomplete, much progress has been made on the satisfiability threshold problem.
For instance, a line of work~\cite{AM,AP,COP} culminated in the article by Ding, Sly and Sun~\cite{DSS} that verified a statistical physics-inspired conjecture as to the precise location of the random $k$-SAT satisfiability threshold for large $k$.

Perhaps surprisingly, with respect to the second task of analysing satisfiability algorithms, even the random $2$-SAT problem still poses challenges.
The analysis of the `simple' \WS\ algorithm on random $2$-SAT is a case in point.
We recall that \WS\ operates as follows~\cite{WalkSAT1,WalkSAT2}.
\begin{quote}
	Initially set all variables to `true'.
	While there exist unsatisfied clauses, pick one of these clauses randomly, then pick a random variable in that clause, flip the truth value assigned to the variable and continue.%
	\footnote{Instead of starting from the all-true assignment one could as well start from a random assignment. In any case, on random inputs both version of the algorithm are equivalent.}
\end{quote}
It is well known that on satisfiable 2-SAT formulas with $n$ variables this algorithm returns a satisfying assignment within an expected $O(n^2)$ running time~\cite{WalkSAT1}; the pretty proof is frequently covered in classes on randomised algorithms.
However, it has been conjectured~\cite{AB}, even `predicted' based on physics methods~\cite{SM}, that \WS\ actually succeeds in {\em linear} expected time on random 2-SAT formulas right up to the satisfiability threshold (see \Sec~\ref{sec_discussion} for a detailed discussion).
The purpose of the present paper is to prove this conjecture.

\subsection{The main result}
To state the main result precisely, let $\PHI=\PHI_{n, m}$ be a random 2-CNF on $n$ variables $x_1,\ldots,x_n$ with $m$ clauses, where each clause is drawn independently and uniformly from the set of all $4n (n-1)$ possible 2-clauses.
(Thus, independently for each clause we pick a random ordered pair of distinct variables and place negations independently with probability $1/2$.)
Throughout the paper we let $m\sim\alpha n$ for a real $\alpha>0$.
Furthermore, we say that the random formula $\PHI$ enjoys a property $\cE$ {\em with high probability} (`\whp') if $\lim_{n\to\infty}\pr\brk{\PHI\in\cE}=1$.

It has been known since the 1990s that $\alpha=1$ is the random 2-SAT satisfiability threshold.
Thus, $\PHI$ is satisfiable \whp if $\alpha<1$, and unsatisfiable \whp if $\alpha>1$~\cite{CR,Goerdt}.
Indeed, throughout the satisfiable regime $\alpha<1$ the random formula $\PHI$ possesses an exponential number of satisfying assignments~\cite{2sat}.

Let $\vT(\Phi)\in\{0,1,2,\ldots\}\cup\{\infty\}$ be the (random) number of times that \WS\ flips a variable on a given input formula $\Phi$.
Depending on the random choices of \WS, $\vT(\Phi)$ is a random variable even if $\Phi$ is non-random.
Hence, to separate the randomness of the algorithm from the random choice of the formula $\PHI$, we study the expected running time $\ex[\vT(\PHI)\mid\PHI]$ given $\PHI$, where the expected value is taken over the random choises made by \WS\ algorithm only.
The following theorem shows that \whp over the choice of $\PHI$, this conditional expectation is linear in $n$ throughout the satisfiable regime.

\begin{theorem}\label{thm_main}
	There exists a constant $C>0$ such that for all $0\leq\alpha<1$ we have
	\begin{align}\label{eq_thm_main}
		\ex[\vT(\PHI)\mid\PHI]&\leq\frac{Cn}{(1-\alpha)^2}&&\mbox\whp
	\end{align}
\end{theorem}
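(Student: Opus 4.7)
The plan follows Papadimitriou's random-walk framework but replaces the zero-drift comparison with a \emph{strictly negative} drift, converting the $O(n^2)$ hitting-time bound into a linear one. Recall the classical argument: fix any satisfying assignment $\tau$ of $\PHI$, let $D_t$ be the Hamming distance between $\sigma_t$ (the assignment after $t$ flips) and $\tau$, and note that since the selected unsatisfied clause $C=\ell_1\vee\ell_2$ has $\sigma_t(\ell_1)=\sigma_t(\ell_2)=0$ while $\tau$ satisfies $C$, at least one of the two variables differs from $\tau$. Thus $\pr[D_{t+1}=D_t-1\mid\mathcal{F}_t]\ge 1/2$, which gives an $O(n^2)$ hitting time. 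To obtain (\ref{eq_thm_main}), the goal is to establish a constant strict excess in the downward probability: \whp\ over $\PHI$,
\[
\pr[D_{t+1}=D_t-1\mid \mathcal{F}_t,\PHI]\ \geq\ \tfrac12+c(1-\alpha)^2\qquad\text{whenever }D_t>0,
\]
for some absolute $c>0$, where $\mathcal{F}_t$ is the algorithm's history up to step $t$. Combined with the trivial bound $D_0\le n$, gambler's-ruin estimates for walks with constant bias then yield $\ex[\vT(\PHI)\mid\PHI]=O(n/(1-\alpha)^2)$.

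The source of this excess drift is structural. If $\tau$ behaves like a uniformly random satisfying assignment, then for an unsatisfied clause $C$ the conditional distribution of $(\tau(\ell_1),\tau(\ell_2))$ is close to uniform over $\{(1,0),(0,1),(1,1)\}$. In the case $(1,1)$ both variables disagree with $\tau$, so \emph{any} flip decreases $D_t$; in the other two cases exactly one variable disagrees and the flip decreases $D_t$ with probability $1/2$. The average per-step increment is then $-1/3$ rather than $0$. The plan is therefore: (i) choose $\tau$ via a planted construction so that its marginals on clauses are explicit; (ii) track the joint evolution of $(\sigma_t,\tau,\PHI)$ and verify that the ``three-uniform-configurations'' heuristic remains quantitatively valid at each step; (iii) extract the drift $c(1-\alpha)^2$ uniformly in $t$ and conclude via gambler's ruin.

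For (i)--(iii) I would work in the planted 2-SAT model, drawing $\tau^*$ uniformly and then $\PHI$ uniformly from 2-CNFs satisfied by $\tau^*$, where the conditional clause distribution is completely explicit. A principle of deferred decisions reveals the clauses containing a variable only when \WS\ first touches that variable, so that conditional on $\mathcal{F}_t$ the unexamined portion of $\PHI$ is a fresh planted formula; a martingale concentration step then upgrades the per-step drift from expectation to a uniform-in-$t$ high-probability statement. A contiguity or planting-to-uniform comparison lemma, in the spirit of those used in the random $k$-SAT threshold literature (e.g.\ \cite{DSS}), transfers the analysis back to the uniform random $\PHI$ of the theorem. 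The $(1-\alpha)^2$ factor in (\ref{eq_thm_main}) should enter through this transfer, since the planted and uniform measures grow increasingly dissimilar as $\alpha\uparrow 1$.

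The chief obstacle will be the strong coupling between $\PHI$ and $\sigma_t$: by time $t$, \WS\ has exploited $\PHI$ many times, and the conditional law of $\PHI$ given $\mathcal{F}_t$ is delicate. Concretely, I expect the hardest step to be showing that the conditional distribution of $(\tau(\ell_1),\tau(\ell_2))$ at a currently-unsatisfied clause remains sufficiently close to uniform on the three allowed configurations even after many variables have been flipped, possibly repeatedly. The deferred-decisions and planting machinery should reduce this to controlling a bounded number of ``bad'' local configurations around the current assignment (short cycles in the implication graph, unsatisfied clauses whose variables have already been queried, and the like), and it is precisely here that $\alpha<1$ is used: bounded away from threshold, the local structure of $\PHI$ is tree-like and long-range correlations do not overwhelm the drift.
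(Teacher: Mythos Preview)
Your plan has a genuine gap: the uniform strict-negative-drift bound you aim for is simply false. Fix any satisfying assignment $\tau$ of $\PHI$ and take $\sigma$ with $D_t=1$, differing from $\tau$ only at the variable $x$. Every clause unsatisfied under $\sigma$ must contain the literal $\tau(x)\cdot x$ (the only literal on which $\sigma$ and $\tau$ disagree, and $\tau$ satisfies the clause); its other literal $\ell$ then has $\sigma(\ell)=\tau(\ell)=-1$. So flipping $x$ yields $D_{t+1}=0$ and flipping $|\ell|$ yields $D_{t+1}=2$, each with probability $1/2$: the drift is exactly zero. More generally, for any fixed $\PHI$ and $\tau$ there are reachable states where every unsatisfied clause has exactly one disagreeing variable, so no inequality $\pr[D_{t+1}=D_t-1\mid\mathcal F_t,\PHI]\ge 1/2+c$ can hold uniformly in $t$. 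Since the theorem bounds $\ex[\vT(\PHI)\mid\PHI]$ for a \emph{fixed} $\PHI$, averaging over a random $\tau$ or over planted randomness cannot repair this: once you condition on $\PHI$ there is no formula randomness left, and the ``three-uniform-configurations'' heuristic evaporates. The deferred-decisions step is also inapplicable: at every step \WS\ selects uniformly among \emph{all} currently unsatisfied clauses, which requires the full clause list from time $0$; you cannot reveal a clause only when a variable is ``first touched''. This is precisely why the lazy-revelation analyses that work for Unit Clause or pure-literal heuristics do not transfer to \WS.

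The paper takes an entirely different route that avoids global drift. For each literal $l$ it forms the \UCP\ closure $\Phi^{(l)}$ with variable set $\cV(\PHI,\{l\})$, and proves two deterministic facts: (a) once \WS\ has set every literal of $\cL(\PHI,\{l\})$ to true, it never again flips a variable of $\cV(\PHI,\{l\})$; and (b) Papadimitriou's zero-drift random-walk argument, applied \emph{locally} inside $\Phi^{(\sigma^*(x)\cdot x)}$, bounds the expected number of flips of its variables by $O(|\cV(\PHI,\{\sigma^*(x)\cdot x\})|^2)$. Summing over $x$ gives $\ex[\vT(\PHI)\mid\PHI]\le C\cdot X(\PHI)$ with $X(\PHI)=\sum_x\bigl(|\cV(\PHI,\{x\})|^2+|\cV(\PHI,\{\neg x\})|^2\bigr)$. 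The only probabilistic ingredient is the high-probability bound $X(\PHI)\le Cn/(1-\alpha)^2$, obtained from branching-process tail estimates on \UCP\ cascades. The $(1-\alpha)^{-2}$ factor thus comes from the typical implication-tree size $\Theta(1/(1-\alpha))$ being squared, not from any planted/uniform comparison.
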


Thus, \Thm~\ref{thm_main} shows that for any $\alpha$ below the satisfiability threshold, the expected running time of \WS\ scales linearly in the number $n$ of variables, with a prefactor that depends on $\alpha$ but remains fixed as $n\to\infty$.
By contrast, the worst-case bound from~\cite{WalkSAT1} yields an upper bound that is quadratic in $n$.
Furthermore, the best previous (albeit unpublished) analysis of \WS\ gave a superlinear bound of $O(n\log^6n)$ for $\alpha<0.99$~\cite{Dolle}.
Finally, for $\alpha<1/2$ (but not beyond) an $O(n)$ bound could be obtained easily because in this regime the formula $\PHI$ decomposes into mostly bounded-sized sub-formulas on disjoint variable sets; of course, this argument breaks down for $1/2<\alpha<1$.

After introducing a bit of notation we proceed to outline the proof of \Thm~\ref{thm_main} and subsequently discuss further related work and open questions.
The remaining sections then contain the proof details.

\subsection{Preliminaries and notation}\label{sec_pre}
Throughout the paper we encode the Boolean value `true' as $1$ and `false' as $-1$, thus an assignment of a variable set $V$ is a vector in $\{-1, 1\}^V$.
Additionally, it will turn out to be convenient to write $(+1)\cdot x$ for the literal $x$ and $(-1)\cdot x$ for the literal $\neg x$.
As usual, for a Boolean variable $x$ we denote by $\neg x$ the negation of $x$.
Moreover, a literal $l$ is either a Boolean variable $x$ or its negation $\neg x$.
The same notation extends to literals $l$; thus, $(+1)\cdot l\equiv l$ and $(-1)\cdot l\equiv\neg l$.
Conversely, for a literal $l\in\{x,\neg x\}$ we denote by $\abs l=x$ the underlying Boolean variable.
A 2-SAT formula or a 2-CNF formula $\Phi$ consists of a finite set $V(\Phi)$ of variables and a finite set $F(\Phi)$ of clauses, where each clause contains two distinct variables.

To fix further notation, we refer to the following \WS\ pseudocode.

\IncMargin{1em}
\begin{algorithm}[h!]\label{alg_ws}
    \KwData{A 2-CNF $\Phi$ on $n$ variables.}
	let $\SIGMA^{(0)} = \vecone$ be the all-true assignment and set $t=0$\;
	\While{$\SIGMA^{(t)}$ does not satisfy $\Phi$ and $t<100n^2$}{
		randomly and uniformly pick a clause $\va^{(t)} = \vl_1^{(t)} \vee \vl_2^{(t)}$ that $\SIGMA^{(t)}$ fails to satisfy \;
		choose $\vh^{(t)} \in \{1, 2\}$ uniformly\;
		obtain $\SIGMA^{(t+1)}$ from $\SIGMA^{(t)}$ by flipping the value of variable $|\vl_{\vh^{(t)}}^{(t)}|$\;
		increase $t$ by one\;
    }
	\Return\ $\SIGMA^{(t)}$.
    \caption{\WalkSAT}\label{fig_walksat}
\end{algorithm}
\DecMargin{1em}

We tacitly assume $n$ is sufficiently large for our various estimates to hold.
Asymptotic notation such as $O(\nix),o(\nix)$ etc.\ always refers to the limit $n\to\infty$.

\section{Proof strategy}

\noindent
The classical method for analysing linear time algorithms on random formulas is the so-called `method of deferred decisions', beautifully explained, e.g., in~\cite{Achlioptas01}.
The basic idea is to trace the progress of the algorithm step by step in such a way that the parts of the formula pertinent to the execution of the current step remain random subject to some suitable conditioning.
For example, one might condition on the number of clauses with a specific number of as yet unassigned variables.
Then the relevant random variables that determine the progress of the algorithm can typically be tracked via a system of ordinary differential equations~\cite{Wormald}, thus turning the combinatorial problem into an exercise in calculus.
Examples of this analysis technique (with certain refinements) include the work of \Chvatal\ and Reed~\cite{CR} and of Frieze and Suen~\cite{FriezeSuen}.

A necessary condition for the method of deferred decisions to apply is that the algorithm proceeds essentially linearly, i.e., assigns one variable at a time and never, or at least very rarely, revises prior decisions.
Unfortunately, \WS\ grossly violates this necessary condition.
Due to its random choices, the algorithm constantly revisits variables and is bound to flip their values many times over.
Therefore, the analysis of \WS\ requires new ideas.

Building upon insights from~\cite{2sat}, we will combine a probabilistic and a deterministic argument to prove \Thm~\ref{thm_main}.
Specifically, we will show that \whp around each variable two sub-formulas of the random formula $\PHI$ can be identified that `lock \WS\ in'.
This means that each of the two sub-formulas has the property that once \WS\ finds a satisfying assignment of the sub-formula, \WS\ is not going to flip any of the sub-formula's variables anymore.
We refer to these sub-formulas as {\em implication sub-formulas}.
The probabilistic part of the analysis consists in establishing a reasonably small bound on the total sizes of the implication sub-formulas.
The deterministic part consists in bounding the running time of \WS\ in terms of this total size.

A key tool to construct the implication sub-formulas is the Unit Clause Propagation algorithm, with which we start.
Subsequently we will verify that the implication sub-formulas do indeed `lock \WS\ in'.
Finally we derive the desired running time bound.

\subsection{Unit Clause Propagation and implication sub-formulas}

Unit Clause Propagation (`\UCP') can be viewed as an algorithm that iteratively tracks logical implications. 
More precisely, the following version of the procedure is given a set of literals $\cL$, which are deemed to be true.
The \UCP\ algorithm then enhances the set $\cL$ by pursuing direct logical implications necessary to avoid violated clauses.

\IncMargin{1em}
\begin{algorithm}[h!]
    \KwData{A 2-CNF $\Phi$ along with a set $\cL$ of literals deemed true.}
    \While{\upshape there exists a clause $a\equiv \neg l\vee l'$ with $l\in\cL$ and $l'\not\in\cL$}{%
        add literal $l'$ to $\cL$\;
        add $a$ to $\cC$\;}
    \Return\ $\cL,\cC$\;
    \caption{Unit Clause Propagation (`\UCP').}\label{fig_ucp}
\end{algorithm}
\DecMargin{1em}

We will use \UCP\ to construct the aforementioned implication sub-formulas.
Thus, let $\Phi$ be a 2-CNF formula. 
For a literal $l$ let $\cL(\Phi, \{l\})$ be the set of literals produced by \UCP$(\Phi, \{l\})$ and let $\cV(\Phi, \{l\})$ be the corresponding set of variables.
Also, we write $\cC(\Phi, \{l\})$ for the set of clauses produced by \UCP$(\Phi, \{l\})$.
Further, let $\Phi^{(l)}$ be the sub-formula comprising of $\cV(\Phi,\cbc l)$ and $\cC(\Phi,\{l\})$.
Finally, for a given formula $\Phi$ let
\begin{align}\label{eql2}
	X(\Phi)&=\sum_{x\in V(\Phi)}|\cV(\Phi,\cbc{x})|^2+|\cV(\Phi,\cbc{\neg x})|^2
\end{align}
be the sum of squares of the sizes of the implication sub-formulas.
The following statement bounds $X(\PHI)$ for the random formula $\PHI$.
The proof of \Prop~\ref{prop_l2}, which builds upon arguments from~\cite{2sat}, can be found in \Sec~\ref{sec_prop_l2}.

\begin{proposition}\label{prop_l2}
	There exists a constant $C>0$ such that for $0<\alpha<1$ \whp we have
	\begin{align*}
		X(\PHI)&\leq\frac{Cn}{(1-\alpha)^2}.
	\end{align*}
\end{proposition}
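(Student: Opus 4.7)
The plan is to interpret the UCP closure as a reachability set in the \emph{implication digraph} of $\Phi$ and then to bound the squared cluster sizes via a branching-process coupling. To any 2-CNF $\Phi$ associate the digraph $D(\Phi)$ on the $2|V(\Phi)|$ literal-vertices, placing arcs $\neg l_1\to l_2$ and $\neg l_2\to l_1$ for every clause $l_1\vee l_2\in F(\Phi)$. A direct induction on the steps of Algorithm~\ref{fig_ucp} shows that $\cL(\Phi,\{l\})$ is precisely the set of vertices reachable from $l$ in $D(\Phi)$, so $\cV(\Phi,\{l\})$ is the projection of this set to the underlying variables.

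For the random formula $\PHI=\PHI_{n,m}$ with $m\sim\alpha n$, each literal in $D(\PHI)$ has out-degree distributed as $\mathrm{Bin}(m,1/n)\approx\mathrm{Po}(\alpha)$. I would explore the out-component of any fixed literal $l$ in a breadth-first manner, revealing clauses only when first touched; exchangeability of the remaining clauses then gives stochastic dominance by the total progeny $T$ of a subcritical Galton--Watson process with $\mathrm{Po}(\alpha)$ offspring, which has mean $1/(1-\alpha)$ and exponential tails at rate $\Theta((1-\alpha)^2)$. Exploiting the skew-symmetry $l\to l'\Leftrightarrow\neg l'\to\neg l$ of $D(\PHI)$ together with the cluster-structure estimates developed in~\cite{2sat}, one sharpens the naive branching-process bound to obtain $\ex|\cV(\PHI,\{l\})|^2=O(1/(1-\alpha)^2)$ uniformly in $l$; linearity of expectation then yields $\ex X(\PHI)=O(n/(1-\alpha)^2)$.

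To upgrade this expectation bound to the claimed \whp statement I would apply either Chebyshev's inequality after a direct variance estimate or a bounded-differences Azuma--Hoeffding argument on the $m$-clause product space: resampling a single clause of $\PHI$ alters $X(\PHI)$ by at most a quantity controlled by the typical cluster sizes, which by the branching-process tail bounds are of order $(\log n)/(1-\alpha)$ \whp, so the sum of squared differences is of lower order. The main obstacle is the sharp single-literal second-moment estimate: a generic branching-process calculation delivers only $\ex|\cV(\PHI,\{l\})|^2=\Theta(1/(1-\alpha)^3)$, so recovering the extra factor of $1-\alpha$ must come from the finer analysis of the paired random digraph $D(\PHI)$, specifically by adapting the cluster-size estimates of~\cite{2sat} to the present setting. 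The BFS coupling and the concentration step are then comparatively routine.
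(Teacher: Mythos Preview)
Your outline matches the paper's proof almost exactly: bound $\ex[X(\PHI)]$ via a branching-process/component-size argument, then upgrade to \whp via a bounded-differences (Azuma--Hoeffding) inequality on the $m$-clause product space. Two points deserve comment.

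\emph{The second-moment bottleneck.} You are right that a naive Galton--Watson coupling with $\mathrm{Po}(\alpha)$ offspring only yields $\ex|\cV(\PHI,\{l\})|^2=\Theta((1-\alpha)^{-3})$, and that recovering the extra factor of $1-\alpha$ is the crux. The paper does \emph{not} obtain this via the skew-symmetry of $D(\PHI)$; instead it imports a tail bound from~\cite{2satclt} (stated here as \Lem~\ref{lem_forced_tail}): for $t>4/(1-\alpha)$ one has $\Pr[|\cV(\PHI,\{l\})|>t]\leq(2+o(1))\exp(-\alpha t/20)$. The point is that beyond the threshold $t_0=4/(1-\alpha)$ the tail decays at rate $\Theta(\alpha)$, not $\Theta((1-\alpha)^2)$; the contribution below $t_0$ is trivially $\leq t_0^2=O((1-\alpha)^{-2})$, and the contribution above $t_0$ is $O(1)$ once $\alpha\geq0.1$. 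For $\alpha<0.1$ the paper simply compares with a fixed subcritical \Erdos--\Renyi\ graph. So your diagnosis of the obstacle is correct, but the resolution is a sharper tail estimate borrowed from~\cite{2satclt}, not a skew-symmetry trick.

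\emph{The concentration step.} Azuma requires a \emph{deterministic} Lipschitz bound, and resampling one clause can in principle change $X(\PHI)$ by $\Theta(n^2)$ (if that clause sits in a giant implication component). Your ``typical cluster sizes are $O(\log n)$'' observation is the right intuition but does not directly give a Lipschitz constant. The paper fixes this by truncating: set $Y(\PHI)=\sum_x\min\{|\cV(\PHI,\{x\})|^2+|\cV(\PHI,\{\neg x\})|^2,\log^4 n\}$, show that $Y$ has clause-wise Lipschitz constant $O(\log^6 n)$ (only $O(\log^2 n)$ variables are affected, each by at most $\log^4 n$), apply Azuma to $Y$, and then invoke a separate tail bound (again from~\cite{2satclt}) to show $X(\PHI)=Y(\PHI)$ \whp. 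You should make this truncation explicit rather than folding it into ``comparatively routine''.
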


\subsection{Locking \WS\ in}
We continue with a deterministic statement to the effect that once \WS\ finds a satisfying assignment of an implication sub-formula $\Phi^{(l)}$ with $l$ set to true, the algorithm will not subsequently flip any variable from $\cV(\Phi,\cbc l)$.
Recall that the random variable $\vT(\nix)\geq0$ equals the total number of flips that \WS\ performs.
Also recall from \Sec~\ref{sec_pre} that $\SIGMA^{(t)}$ is the assignment that \WS\ obtained after the first $t$ flips.
Finally, for a literal $l$ and an assignment $\sigma$ let $\fE(\Phi, l, \sigma)$ be the predicate that all literals from $\cL(\Phi, \{l\})$ are true under $\sigma$.

We observe that if we take some satisfying assignment $\sigma$ of a formula $\Phi$, then the implication subformulas of all literals $l$ that $\sigma$ sets to true are satisfied.

\begin{fact}\label{claim_sat_2}
    Let $\Phi$ be a satisfiable 2-CNF formula and $\sigma^*$ be a satisfying assignment of $\Phi$.
	Then for all variables $x \in V(\Phi)$ the predicate $\fE(\Phi, \sigma^*(x)\cdot x , \sigma^*)$ holds.
\end{fact}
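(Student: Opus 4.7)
The plan is to prove this by a straightforward induction on the order in which literals are added to $\cL$ by \UCP, showing that every literal ever added is true under $\sigma^*$. Once that is established, $\fE(\Phi, \sigma^*(x)\cdot x, \sigma^*)$ follows immediately from the definition of $\fE$, since the literals in $\cL(\Phi,\{\sigma^*(x)\cdot x\})$ are precisely the literals added during the run of \UCP\ on input $(\Phi, \{\sigma^*(x)\cdot x\})$.

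First I would set up the base case: the procedure starts with $\cL = \{\sigma^*(x)\cdot x\}$, and by the convention that $(+1)\cdot x \equiv x$ and $(-1)\cdot x \equiv \neg x$, the single literal in the initial $\cL$ evaluates to $\mathrm{true}$ under $\sigma^*$. Next I would handle the inductive step: suppose every literal currently in $\cL$ is true under $\sigma^*$, and let $a \equiv \neg l \vee l'$ be a clause selected by \UCP\ with $l \in \cL$ and $l' \notin \cL$. By the inductive hypothesis $l$ is true under $\sigma^*$, so $\neg l$ is false under $\sigma^*$. Because $\sigma^*$ is a satisfying assignment of $\Phi$, it satisfies $a$, which forces $l'$ to be true under $\sigma^*$. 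Thus the literal added to $\cL$ in this step is again true under $\sigma^*$, completing the induction.

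Since \UCP\ terminates (no literal is added twice and $|\cL|$ is bounded by the number of literals in $\Phi$), the induction shows that every literal of $\cL(\Phi, \{\sigma^*(x)\cdot x\})$ is satisfied by $\sigma^*$. Recalling the definition of $\fE$ from the paragraph preceding the statement, this is exactly the assertion $\fE(\Phi, \sigma^*(x)\cdot x, \sigma^*)$.

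There is no real obstacle here; the whole argument is a one-line consequence of the fact that a satisfied clause with a false literal forces the other literal to be true. The only thing one must be careful about is to match the notational conventions, in particular that $\sigma^*(x)\cdot x$ denotes the literal on $x$ that is true under $\sigma^*$, so that the base case of the induction is correct.
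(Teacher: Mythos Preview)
Your proof is correct and follows exactly the same reasoning as the paper: the paper simply notes that $\sigma^*(x)\cdot x$ is the literal true under $\sigma^*$ and says the assertion is ``immediate from the construction of $\Phi^{(l)}$ by way of the \UCP\ algorithm,'' which is precisely the induction you have spelled out in detail. There is nothing to add or correct.
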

\begin{proof}
    Recall that $\sigma^*(x) \cdot x$ is the literal of $x$ which is deemed true under $\sigma^*$.
	Then, the assertion is immediate from the construction of $\Phi^{(l)}$ by way of the \UCP\ algorithm.
\end{proof}

Furthermore, we show that once $\fE(\Phi,l,\SIGMA^{(t)})$ occurs for a $0\leq t\leq\vT(\Phi)$, it will continue to occur at all subsequent times.

\begin{proposition}\label{prop_flips_per_var}
	Let $\Phi$ be a 2-CNF and let $l$ be a literal.
	If there exists $0 \leq t_0 \leq \vT(\Phi)$ such that the event $\fE(\Phi,l,\SIGMA^{(t_0)})$ occurs, then the event $\fE(\Phi,l,\SIGMA^{(t)})$ occurs for all $t_0<t\leq\vT(\Phi)$.
\end{proposition}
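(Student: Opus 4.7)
The plan is to prove the statement by induction on $t$, so it suffices to establish the one-step claim: if $\fE(\Phi,l,\SIGMA^{(t_0)})$ occurs and $t_0<\vT(\Phi)$, then $\fE(\Phi,l,\SIGMA^{(t_0+1)})$ also occurs. Once I have this single step, iterating it along the \WS\ trajectory gives the full result. The step from $\SIGMA^{(t_0)}$ to $\SIGMA^{(t_0+1)}$ differs only in the value of the single variable $|\vl_{\vh^{(t_0)}}^{(t_0)}|$, namely the variable of the literal \WS\ picked inside the unsatisfied clause $\va^{(t_0)}$. So it is enough to show that this flipped variable does not lie in $\cV(\Phi,\cbc{l})$.

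The key claim, then, is the following stability property: whenever $\fE(\Phi,l,\SIGMA^{(t_0)})$ holds, every clause of $\Phi$ that shares a variable with $\cV(\Phi,\cbc{l})$ is satisfied by $\SIGMA^{(t_0)}$. I would prove this directly from the definition of \UCP. Suppose some clause $a\in F(\Phi)$ contains a variable $x\in\cV(\Phi,\cbc{l})$, and let $l_1\in\{x,\neg x\}$ be the literal with $l_1\in\cL(\Phi,\cbc{l})$. Then $l_1$ is true under $\SIGMA^{(t_0)}$ by definition of $\fE$. If $a$ contains the literal $l_1$, we are done; otherwise $a$ has the form $\neg l_1 \vee l_2$, and then the termination condition of \UCP\ forces $l_2\in\cL(\Phi,\cbc{l})$ (else \UCP\ would not have halted). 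Hence $l_2$ is also true under $\SIGMA^{(t_0)}$ and $a$ is again satisfied.

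From this closure property the proposition follows in one line: since $\va^{(t_0)}$ is unsatisfied, it cannot share a variable with $\cV(\Phi,\cbc{l})$, so the flipped variable lies outside $\cV(\Phi,\cbc{l})$, and consequently every literal of $\cL(\Phi,\cbc{l})$ retains its truth value in $\SIGMA^{(t_0+1)}$. This establishes the inductive step, and the proposition follows for all $t_0\le t\le \vT(\Phi)$.

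The content of the argument is genuinely small; I do not expect a real obstacle. The one subtlety worth being careful about is that a clause $\neg l_1 \vee l_2$ as above need not itself lie in $\cC(\Phi,\cbc{l})$ — it might have been examined by \UCP\ at a point when $l_2$ was already in $\cL$. What matters is only the invariant that the set $\cL(\Phi,\cbc{l})$ is closed under unit propagation inside $\Phi$, which is exactly the halting condition of Algorithm~\ref{fig_ucp}. I would make sure to phrase the proof in terms of $\cL(\Phi,\cbc{l})$ rather than $\cC(\Phi,\cbc{l})$ so that this distinction does not cause confusion.
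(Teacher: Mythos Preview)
Your proof is correct and follows essentially the same approach as the paper: both reduce to the one-step inductive claim and use the \UCP\ halting condition to conclude that any clause touching $\cV(\Phi,\cbc{l})$ is already satisfied under $\SIGMA^{(t_0)}$, so the flipped variable must lie outside $\cV(\Phi,\cbc{l})$. The paper phrases this as a direct contradiction (assume a literal $l'\in\cL(\Phi,\cbc{l})$ gets flipped, derive that the other literal of the chosen clause is also in $\cL$ and hence true), while you isolate the closure property as a separate claim first; the logical content is identical, and your remark about working with $\cL$ rather than $\cC$ is exactly the right subtlety to flag.
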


The proof of \Prop~\ref{prop_flips_per_var} can be found in \Sec~\ref{sec_prop_flips_per_var}.
Combining Fact~\ref{claim_sat_2} and \Prop~\ref{prop_flips_per_var}, we see that once \WalkSAT\ satisfies the implication subformula $\Phi^{(l)}$ of some literal $l$, it will not flip any of its variables at any later time.

\subsection{The expected running time}

For a 2-CNF $\Phi$ and a literal $l$ let
\begin{align*}
	\vN(\Phi, l) = \sum_{v \in \cV(\Phi, \{l\})}\sum_{t=1}^{\vT(\Phi)} \vecone\cbc{\SIGMA^{(t-1)}(v)\neq\SIGMA^{(t)}(v)}
\end{align*}
be the total number of times  that \WS$(\Phi)$ flips variables from $\cV(\Phi, \{l\})$.
We make a note of the following running time bound.

\begin{fact}\label{claim_sat_3}
    Let $\Phi$ be a satisfiable 2-CNF and $\sigma^*$ be a satisfying assignment.
	Then
	\begin{align}\label{eq_claim_sat_3}
			\vT(\Phi)\leq\sum_{x \in V} \vN(\Phi,\sigma^*(x) \cdot x  ).
		\end{align}
\end{fact}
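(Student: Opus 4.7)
The plan is a short double-counting (Fubini) argument: rewrite both sides as sums indexed by variables and single time steps, and observe that each flip is counted at least once on the right-hand side.

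First, I would expand the right-hand side of (\ref{eq_claim_sat_3}) according to the definition of $\vN$:
\begin{align*}
\sum_{x \in V} \vN(\Phi,\sigma^*(x)\cdot x)
= \sum_{x \in V}\sum_{v \in \cV(\Phi,\{\sigma^*(x)\cdot x\})}\sum_{t=1}^{\vT(\Phi)}\vecone\cbc{\SIGMA^{(t-1)}(v)\neq\SIGMA^{(t)}(v)}.
\end{align*}
Swapping the outer two sums, this equals
\begin{align*}
\sum_{v \in V}\,\bigl|\{x \in V : v \in \cV(\Phi,\{\sigma^*(x)\cdot x\})\}\bigr|\cdot\sum_{t=1}^{\vT(\Phi)}\vecone\cbc{\SIGMA^{(t-1)}(v)\neq\SIGMA^{(t)}(v)}.
\end{align*}

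Next, I would argue that the cardinality in the previous display is at least $1$ for every variable $v$. This follows directly from the \UCP\ pseudocode in Algorithm~\ref{fig_ucp}: the procedure never removes literals from its input, so $l \in \cL(\Phi,\{l\})$ for every literal $l$, and hence $|l|\in\cV(\Phi,\{l\})$. Applying this with $l=\sigma^*(v)\cdot v$ and $x=v$, we obtain $v\in\cV(\Phi,\{\sigma^*(v)\cdot v\})$, so $v$ witnesses itself.

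Finally, because each iteration of \WalkSAT\ flips exactly one variable, the total flip count decomposes as
\begin{align*}
\vT(\Phi) = \sum_{v \in V}\sum_{t=1}^{\vT(\Phi)}\vecone\cbc{\SIGMA^{(t-1)}(v)\neq\SIGMA^{(t)}(v)}.
\end{align*}
Combining the two displays yields (\ref{eq_claim_sat_3}). There is no real obstacle here; the only point one must be a little careful about is the trivial but essential fact that $v$ always belongs to its own implication sub-formula $\Phi^{(\sigma^*(v)\cdot v)}$, which is what makes each flip contribute to at least one term of the outer sum over $x$.
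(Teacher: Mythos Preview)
Your proof is correct and is essentially the same as the paper's: both hinge on the observation that $v\in\cV(\Phi,\{\sigma^*(v)\cdot v\})$ and that each step of \WalkSAT\ flips exactly one variable. The only cosmetic difference is that the paper drops all but the $v=x$ term from $\vN(\Phi,\sigma^*(x)\cdot x)$ directly, while you make the Fubini swap explicit and phrase it as a multiplicity of at least one.
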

\begin{proof}
	The proof utilizes the fact that $x \in  \cV(\Phi, \sigma^*(x) \cdot x)$ for any $x \in V$. Clearly,
	\begin{align*}
		\vN(\Phi, \sigma^*(x) \cdot x) \geq \sum_{t=1}^{\vT(\Phi)} \vecone\cbc{\SIGMA^{(t-1)}(x)\neq\SIGMA^{(t)}(x)},
	\end{align*}
	for any $x \in V$ and
	\begin{align*}
		\sum_{x \in V} \vecone\cbc{\SIGMA^{(t-1)}(x)\neq\SIGMA^{(t)}(x)} = 1
	\end{align*}
	for any $t = 1, \ldots, \vT(\Phi)$. Finally,
	\begin{align*}
		\sum_{x \in V} \vN(\Phi,\sigma^*(x) \cdot x  ) \geq \sum_{x \in V} \sum_{t=1}^{\vT(\Phi)} \vecone\cbc{\SIGMA^{(t-1)}(x)\neq\SIGMA^{(t)}(x)} = \sum_{t=1}^{\vT(\Phi)} \sum_{x \in V} \vecone\cbc{\SIGMA^{(t-1)}(x)\neq\SIGMA^{(t)}(x)} = \sum_{t=1}^{\vT(\Phi)} 1 = \vT(\Phi),
	\end{align*}
	which completes the proof.
\end{proof}

As a next step we estimate the r.h.s.\ of~\eqref{eq_claim_sat_3} in terms of the sizes of the implication sub-formulas.
The proof of the following proposition can be found in \Sec~\ref{sec_prop_upper_bound_small_subformula}.

\begin{proposition}\label{prop_upper_bound_small_subformula}
	There is a constant $C>0$ such that the following is true.
    Let $\Phi$ be a satisfiable 2-CNF formula and $\sigma^*$ be a satisfying assignment.
	Then
	\begin{align*}
		\Erw \brk{\vN(\Phi,\sigma^*(x)\cdot x)} &\leq C\cdot\abs{\cL(\Phi, \{\sigma^*(x)\cdot x)\})}^2&&\mbox{for all }x\in V(\Phi).
	\end{align*}
\end{proposition}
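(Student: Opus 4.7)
The plan is to combine Fact~\ref{claim_sat_2} and Proposition~\ref{prop_flips_per_var} with a one-dimensional random-walk argument in the spirit of Papadimitriou, but restricted to the variables of the implication sub-formula.

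Fix $x\in V(\Phi)$, set $l=\sigma^*(x)\cdot x$, and abbreviate $\cV=\cV(\Phi,\{l\})$, $\cL=\cL(\Phi,\{l\})$. I would introduce the Hamming-type potential
\[d_t=|\{v\in\cV:\SIGMA^{(t)}(v)\neq\sigma^*(v)\}|,\]
equivalently the number of literals of $\cL$ that are false under $\SIGMA^{(t)}$. By construction $d_t=0$ iff $\fE(\Phi,l,\SIGMA^{(t)})$ occurs, and Fact~\ref{claim_sat_2} together with Proposition~\ref{prop_flips_per_var} shows that once $d_t$ reaches $0$ no variable of $\cV$ is ever flipped again. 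Consequently $\vN(\Phi,l)$ is upper bounded by the number of flips of $\cV$-variables up to the first hitting time of $\{d_t=0\}$.

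The core deterministic step is to show that, conditionally on \WS\ flipping a variable of $\cV$ at step $t+1$, the increment $d_{t+1}-d_t$ equals $-1$ with (conditional) probability at least $1/2$. Let $a=l_1\vee l_2$ be the unsatisfied clause picked by \WS\ and distinguish two cases. If both variables of $a$ lie in $\cV$, then $\sigma^*$ satisfies $a$ while $\SIGMA^{(t)}$ does not, so at least one of them disagrees with $\sigma^*$; picking the variable to flip uniformly at random then decreases $d_t$ with probability at least $1/2$. If only one variable $v$ of $a$ lies in $\cV$ (say $v=|l_1|$ while $u=|l_2|\notin\cV$), then inspecting the \UCP\ routine shows that $l_1$ itself must lie in $\cL$: otherwise the other literal of $v$ lies in $\cL$, which would have forced \UCP\ to process the clause $a$ and place $u$ into $\cV$, a contradiction. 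Since $l_1\in\cL$ is true under $\sigma^*$ yet false under $\SIGMA^{(t)}$ (as $a$ is unsatisfied), the variable $v$ is wrong, and the resulting $\cV$-flip (which requires \WS\ to select $v$ at its second random choice) deterministically decreases $d_t$. Weighting the two cases by the conditional probability of the $\cV$-flip occurring yields the claimed lower bound of $1/2$.

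With this key property in hand, the sequence $(D_i)_{i\geq 0}$ obtained by sampling $d_t$ at successive $\cV$-flips performs a $\pm 1$ random walk on $\{0,1,\ldots,|\cV|\}$ whose conditional probability of a $-1$ step is at least $1/2$ and which is absorbed at $0$. Papadimitriou's textbook analysis of this walk then bounds its expected hitting time of $0$ by $C'|\cV|^2\leq C'|\cL|^2$, which is exactly the desired inequality. The main obstacle is the second case of the deterministic step: one must extract from the definition of \UCP\ the ``boundary property'' that any clause connecting $\cV$ to its complement is oriented at its $\cV$-endpoint in agreement with $\sigma^*$. Everything else---the reduction to a hitting time and the random-walk estimate---is standard.
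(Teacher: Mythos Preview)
Your proposal is correct and follows essentially the same route as the paper: the paper introduces the identical potential $\vec\Delta^*(\Phi,x,t)=d_t$, carries out the same case split on how many variables of the picked clause lie in $\cV$, proves the same ``boundary property'' (if exactly one variable lies in $\cV$, its literal appearing in the clause must belong to $\cL$, else \UCP\ would have pulled the other variable into $\cV$), and then bounds $\ex[\vN(\Phi,l)]$ by the hitting time of a simple $\pm1$ random walk at $|\cL|$. The only cosmetic difference is that the paper phrases the walk domination via a supermartingale statement on $(\vec\Delta^*(\Phi,x,t))_t$ rather than by subsampling at $\cV$-flips.
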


The crucial observation towards proving \Prop~\ref{prop_upper_bound_small_subformula} is that everything that happens outside of the implication subformula of some literal can only improve the situation (i.e.\ make some of the literals satisfied).
The last step is to estimate the above quantity for the random 2-SAT formula, which completes the proof of \Thm~\ref{thm_main}.

\begin{proof}[Proof of \Thm~\ref{thm_main}]
	Given $0<\alpha<1$ let $\fA$ be the event that $\Phi$ is satisfiable and $X(\Phi)\leq C(1-\alpha)^{-2}n$.
	Since $\alpha$ is below the satisfiability threshold, \Prop~\ref{prop_l2} implies that
	\begin{align}\label{eq_thm_main1}
		\pr\brk{\PHI \in \fA}&=1-o(1).
	\end{align}
	Suppose $\Phi\in\fA$ and let $\SIGMA^* = \SIGMA^*_{\PHI}$ be a satisfying assignment of $\PHI$.
    Then Fact~\ref{claim_sat_3}, \eqref{eql2} and \Prop~\ref{prop_upper_bound_small_subformula} imply that
	\begin{align}\label{eq_thm_main2}
        \ex\brk{T(\PHI)\mid\PHI = \Phi}\leq \Erw\brk{\sum_{x \in V} \vN(\PHI,  \SIGMA^*(x) \cdot x  )\mid\PHI = \Phi} \leq X(\Phi)\leq\frac{Cn}{(1-\alpha)^2}.
	\end{align}
	Finally, the assertion follows from \eqref{eq_thm_main1}--\eqref{eq_thm_main2}:
	\begin{align*}
		\Pr\brk{ \Erw\brk{\vT(\PHI) \mid \PHI} \leq {Cn \over \bc{1 - \alpha}^2}} \geq \Pr\brk{ \Erw\brk{\vT(\PHI) \mid \PHI} \leq {C n\over \bc{1 - \alpha}^2} \mid \PHI \in \fA} \cdot \Pr\brk{\PHI \in \fA} = 1 - o(1).
	\end{align*}
\end{proof}

\subsection{Discussion}\label{sec_discussion}
The random 2-SAT satisfiability threshold was determined independently and simultaneously by Goerdt~\cite{Goerdt} and \Chvatal\ and Reed~\cite{CR}.
While Goerdt's proof is based on counting certain implication chains called `bicycles', the lower bound proof of \Chvatal\ and Reed is algorithmic.
Specifically, \Chvatal\ and Reed prove that a variant of the Unit Clause Propagation algorithm solves random 2-SAT instances in linear time \whp for all clause/variable densities up to the satisfiability threshold.
Another linear time algorithm for constructing satisfying assignments of random 2-CNFs up to the satisfiability threshold can be based on the so-called {\em pure literal heuristic}.
The idea behind it is simple: if a formula contains literal $l$ but not literal $\neg l$, then no harm can be done by setting $l$ to `true'.
This assignment satisfies all clauses in which $l$ appears, which can thus be removed from the formula, which may create additional pure literals.
This process will \whp satisfy all but a bounded number of clauses of a random 2-CNF.
The remaining clauses form short cycles \whp that can be satisfied individually~\cite{2sat,CR}.

Irrespective of the existence of other linear time algorithms, the rigorous study of local search algorithms in general and \WS\ in particular remains an outstanding challenge.
Indeed, despite its simplicity, the \WS\ algorithm has turned out to be remarkably difficult to analyse on random $k$-SAT instances for $k\geq2$.
The important article of Alekhnovich and Ben-Sasson~\cite{AB} contributed a tool called `terminators' in order to estimate the running time of \WS.
Essentially terminators are weighted satisfying assignments.
A bound on the running time of \WS\ can be obtained by multiplying their $\ell_1$-norm and their $\ell_\infty$-norm~\cite[\Thm~3.2]{AB}.
Alekhnovich and Ben-Sasson further define the {\em terminator threshold} as the largest clause/variable density up to which terminators exists such that this product is of order $O(n)$; up to this threshold \WS\ thus runs in linear expected time.
For $k=3$ Alekhnovich and Ben-Sasson proved that the terminator threshold is lower bounded by the pure literal threshold $\alpha_{3,\mathrm{pure}}$ up to which the pure literal rule produces a satisfying assignment.
Broder, Frieze and Upfal~\cite{BFU} had previously established that $\alpha_{3,\mathrm{pure}}\approx 1.63$.

Regarding random 2-SAT, Alekhnovich and Ben-Sasson conjectured that the ``terminator threshold'', i.e., the clause/variable density up to which terminators exist that demonstrate a linear running time of \WS\ on random 2-CNFs, matches the satisfiability threshold $1=\alpha\sim m/n$.
They suggested the proof of this conjecture as a strategy to proving that \WS\ solves random 2-CNFs in linear expected time up to the satisfiability threshold.
This conjecture/proof strategy may seem appealing because in random 2-SAT the satisfiability threshold coincides with the pure literal threshold~\cite{2sat}.
However, the techniques from~\cite{AB} do not suffice to prove the existence of terminators near the satisfiability threshold $\alpha=1$ as a key expansion argument only works for $k\geq3$~\cite[Appendix~A]{AB}.
Hence, in the present paper we establish the linear expected running time of \WS\ directly, without the detour via terminators.
Instead parts of the proof, particularly the proof of \Prop~\ref{prop_l2}, build upon branching process arguments first developed in  recent work on the number of satisfying assignments of random 2-CNFs~\cite{2sat,2satclt}.

\begin{figure}\centering
    \subcaptionbox{Normalized runtime for $\alpha \in \{0.1, 0.3, 0.5, 0.7, 0.9\}$ and 512 values of $n \in [2^{10}, 2^{23})$ spaced geometrically.\label{fig:alpha_const}}{
        \includegraphics[width=0.48\linewidth,trim=0.3cm 0.6cm 1.4cm 1.6cm]{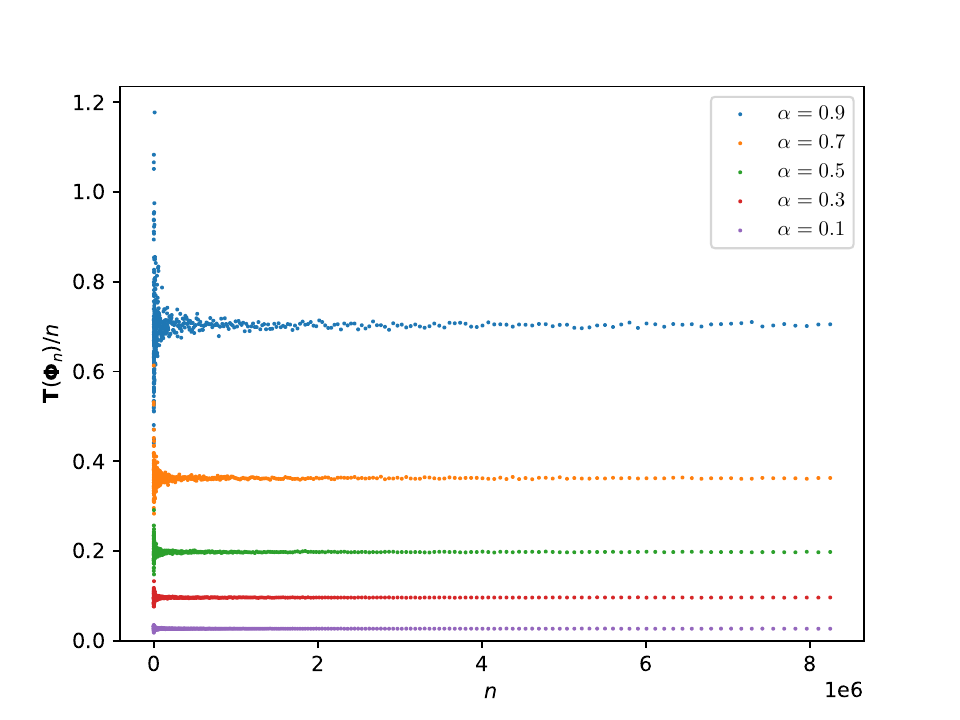}
    }
    \subcaptionbox{Normalized runtime for $n=10^8$ and 512 values of $\alpha \in [0.5, 1-2^{-10}]$ spaced evenly.\label{fig:n_const}}{
        \includegraphics[width=0.48\linewidth,trim=0.3cm 0.6cm 1.4cm 1.6cm]{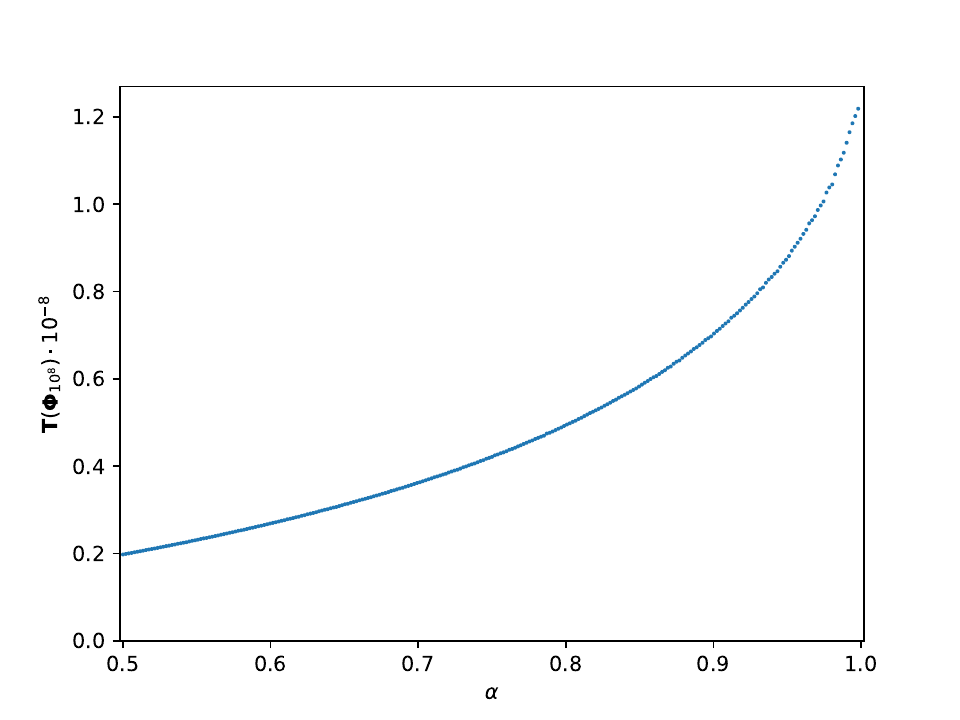}
    }
    \caption{Empirical runtime of \WS, normalized by $n$. Each point represents one run of \WS\ on a random $2$-SAT instance with $m=\alpha n$ clauses.}
    \label{fig:WS_runtime_plot}
\end{figure}

A heuristic, statistical physics based article of Monasson and Semerjian~\cite{SM} investigates the performance of \WS\ on random $k$-SAT instances with $k\geq2$.
Among other things they predict thresholds up to which \WS\ runs in linear expected time.
In the case $k=2$ their prediction matches the value $m/n\sim\alpha=1$.
\Thm~\ref{thm_main} vindicates this prediction.
An exciting open question remains to find the precise dependency of the running time on $\alpha$, which Semerjian and Monasson also consider.
Empirically, the runtime normalized by $n$ does seem to be tightly concentrated around a constant depending on $\alpha$ for large $n$ (cf.\ Figure \ref{fig:alpha_const}), with this constant increasing (albeit slowly) as $\alpha$ approaches $1$ (see Figure \ref{fig:n_const}).

Improving a prior bound from~\cite{COFFKV}, Coja-Oghlan and Frieze~\cite{COF} showed that for large values of $k$ \WS\ runs in linear expected time \whp for clause/variable densities up to $c\cdot 2^k/k$~\cite{COF} for an absolute constant $c>0$.
Up to the value of $c$ this result matches the prediction from~\cite{SM}.
Conversely, \WS\ is known to exhibit an exponential running time \whp for clause/variable densities beyond $c'\cdot 2^k\log^2k/k$ for another constant $c'>0$~\cite{COHH}.
By comparison, the best known polynomial time algorithm for random $k$-SAT succeeds up to densities $(1-\eps_k)2^k\log k/k$, where $\eps_k\to 0$ in the limit of large $k$~\cite{better}.
For comparison, the random $k$-SAT satisfiability threshold has a value of $2^k\log 2+O(1)$~\cite{COP,DSS}.

Local search algorithms inspired by the \WS\ idea play a prominent role in both worst-case exponential time algorithms for the $k$-SAT problem and in practical SAT solving.
A well known example of the former line of work is Sch\"oning's analysis of \WS~\cite{WalkSAT2}, which yields an expected running time of $(2(k-1)/k)^{n+o(n)}$ on satisfiable $k$-CNFs.
Moreover, significant contributions to the practical aspect of solving $k$-SAT problem were made by Hoos \cite{Hoos1, Hoos2, Hoos3}, who suggested and explored various local search algorithms based on \WS.

\section{Proof of \Prop~\ref{prop_l2}}\label{sec_prop_l2}

\noindent
We start with providing an upper bound on $\Erw[X(\PHI)]$.

\begin{lemma}\label{lem_exX}
	There is a constant $C>0$ such that $\ex[X(\PHI)]\leq C(1-\alpha)^{-2}n$.
\end{lemma}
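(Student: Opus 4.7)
My plan is to exploit the exchangeability of the $2n$ literals in $\PHI$ to reduce to a one-literal second-moment computation, and then bound that via a branching-process coupling. Under the random-formula model every literal is equivalent in law under permutations and independent sign flips of variables, so
\begin{align*}
\Erw[X(\PHI)] = 2n \cdot \Erw[|\cV(\PHI,\{x_1\})|^2],
\end{align*}
and it suffices to show the one-literal bound $\Erw[|\cV(\PHI,\{x_1\})|^2] \leq C(1-\alpha)^{-2}$.

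To estimate $|\cV(\PHI,\{x_1\})|$ I would run \UCP\ starting from $\{x_1\}$ as a breadth-first exploration of the implication digraph of $\PHI$, revealing clauses only when needed via the principle of deferred decisions. Each time \UCP\ processes a literal $l$, the number of freshly uncovered clauses containing $\neg l$ is distributed as $\mathrm{Bin}(m-k,1/n)$ with $k$ the number of clauses already revealed, so it is stochastically dominated by $\mathrm{Bin}(m,1/n)\approx\mathrm{Po}(\alpha)$. Consequently $|\cV(\PHI,\{x_1\})|$ is dominated by the total progeny $T$ of a subcritical Galton--Watson branching process with mean offspring $\alpha$, and the standard recursion $T=1+\sum_{i=1}^{Z}T_i$ immediately yields $\Erw[|\cV(\PHI,\{x_1\})|]\leq\Erw[T]=(1-\alpha)^{-1}$.

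The hard step will be extracting the sharper second-moment estimate. Applying the same recursion with $Z\sim\mathrm{Po}(\alpha)$ gives only $\Erw[T^2]=(1-\alpha)^{-3}$, which would lead to the weaker bound $\Erw[X(\PHI)]=O(n(1-\alpha)^{-3})$. To save the extra factor of $(1-\alpha)^{-1}$ I would invoke the refined branching-process machinery of~\cite{2sat}, which exploits two features particular to the 2-SAT implication digraph: its antisymmetry $a\to b$ iff $\neg b\to\neg a$, and the fact that $\cV$ counts variables (so the exploration effectively contracts whenever both literals of some variable are reached). Concretely, one route is to use the identity
\begin{align*}
\sum_{l}|\cL(\PHI,\{l\})|^2 \;=\; \sum_{l_1,l_2}\bigl|\cL(\PHI,\{l_1\})\cap\cL(\PHI,\{l_2\})\bigr|,
\end{align*}
which follows from antisymmetry, and then bound $\Erw[|\cL(\PHI,\{l_1\})\cap\cL(\PHI,\{l_2\})|]$ by conditioning on the first-meeting vertex of the two forward explorations from $l_1$ and $l_2$ and using that the backward reachable sets are themselves dominated by subcritical branching processes with mean $\alpha$. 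This trade of a naive total-progeny second moment for a two-sided meeting-point estimate is where the missing $(1-\alpha)^{-1}$ factor should be recovered.
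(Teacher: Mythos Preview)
Your reduction by symmetry to a single second moment $\Erw[|\cV(\PHI,\{x_1\})|^2]$ is fine, and you correctly identify the crux: the bare Galton--Watson domination only yields $(1-\alpha)^{-3}$. The gap is in how you propose to recover the missing factor. The antisymmetry identity you write is a tautology rather than a gain: expanding $\sum_{l_1,l_2}|\cL(\{l_1\})\cap\cL(\{l_2\})|$ over common elements $l'$ and swapping sums gives $\sum_{l'}|\cL^-(\{l'\})|^2$, which by the very antisymmetry you invoke equals $\sum_l|\cL(\{l\})|^2$ again. So the identity trades one second moment for the same second moment. Your meeting-point heuristic then needs the average expected intersection over the $4n^2$ pairs to be $O((1-\alpha)^{-2}/n)$; but two independent subcritical forward explorations of mean size $(1-\alpha)^{-1}$ meeting and then sharing a further forward tree of mean size $(1-\alpha)^{-1}$ gives $O((1-\alpha)^{-3}/n)$ instead. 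Nothing about ``variables versus literals'' or about contractions when both signs of a variable are reached has been made quantitative, and you do not cite a specific result from~\cite{2sat} that would close this.

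The paper does not try to squeeze $(1-\alpha)^{-2}$ out of a single branching-process identity. It splits into two regimes. For $\alpha$ bounded away from~$1$ (concretely $\alpha<0.1$) it dominates $|\cV(\PHI,\{l\})|$ by the component size of $|l|$ in the undirected variable graph $\Gamma(\PHI)$, which is an \Erdos--\Renyi\ graph $\GG(n,M)$ with $M\leq 0.15n$, and computes $\Erw[|C(v)|^2]$ explicitly from the Borel component-size distribution; this gives an absolute constant. For $\alpha\geq 0.1$ the paper imports a tail bound from~\cite{2satclt}: $\Pr[|\cV(\PHI,\{l\})|>t]\leq(2+o(1))e^{-\alpha t/20}$ for all $t>4/(1-\alpha)$. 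The key feature is that the exponential \emph{rate} $\alpha/20$ does not degenerate as $\alpha\to1$; only the threshold $4/(1-\alpha)$ moves. Hence the second moment is dominated by the contribution from $t\approx 4/(1-\alpha)$, giving $O((1-\alpha)^{-2})$ directly. Note that this tail is sharper than the Galton--Watson tail (whose rate $I(\alpha)=\alpha-1-\log\alpha\sim(1-\alpha)^2/2$ \emph{does} degenerate), so it is not a consequence of the coupling you outline; the work of saving the factor is entirely delegated to the cited lemma.
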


To prove \Lem~\ref{lem_exX} we consider two regimes of $\alpha$ separately.
For the case $\alpha \geq 0.1$ we will use the following lemma.

\begin{lemma}[{\cite[\Lem~4.3]{2satclt}}]\label{lem_forced_tail}
    For any literal $l$ and every $t>4/(1-\alpha)$ we have
    \begin{align*}
        \pr\brk{\abs{\cV(\PHI,\{l\})}>t}\leq(2+o(1))\exp(-\alpha t/20).
    \end{align*}
\end{lemma}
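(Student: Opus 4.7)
The plan is to couple the UCP exploration starting from $l$ with a subcritical Galton--Watson branching process and then apply an exponential large-deviation bound to its total progeny.

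First, I would realize UCP as a breadth-first search of the implication digraph: initialize $\cL=\{l\}$ and, at step $i$, dequeue the $i$-th discovered literal $\vl_i$ and enqueue every $l'$ such that $\neg\vl_i\vee l'\in F(\PHI)$ and $l'\notin\cL$. The number of children added at step $i$ is at most $\vY_i$, the total number of clauses of $\PHI$ that contain $\neg\vl_i$. By the principle of deferred decisions---reveal at step $i$ only the clauses that contain $\neg\vl_i$---the variables $\vY_i$ can be coupled with i.i.d.\ copies $\vZ_1,\vZ_2,\ldots\sim\mathrm{Bin}(m,1/n)$ so that $\vY_i\leq\vZ_i$. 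Thus $\abs{\cV(\PHI,\{l\})}\leq\abs{\cL(\PHI,\{l\})}$ is stochastically dominated by the total progeny of a Galton--Watson process with $\mathrm{Bin}(m,1/n)$ offspring.

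Next, I would apply a standard random-walk argument: if the total progeny exceeds $t$, then the partial sums $\vS_k=\sum_{i=1}^k\vZ_i$ must satisfy $\vS_t\geq t$, since otherwise the BFS queue would have emptied before step $t$. Hence
\[\pr\brk{\abs{\cV(\PHI,\{l\})}>t}\leq\pr\brk{\vS_t\geq t}.\]
Since the summands have mean $m/n\sim\alpha<1$, a Chernoff bound on this sum of i.i.d.\ binomials yields an exponentially small tail. The stated bound $(2+o(1))\exp(-\alpha t/20)$ follows from a conservative choice of tilt parameter in the exponential Markov inequality, with the hypothesis $t>4/(1-\alpha)$ used to absorb the $1+o(1)$ corrections coming from the Binomial-to-Poisson approximation of $\vZ_i$ and from the BFS coupling.

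The main subtlety is that the branching process becomes near-critical as $\alpha\uparrow 1$, so the sharp large-deviation rate $\alpha-1-\log\alpha\sim(1-\alpha)^2/2$ degenerates near the satisfiability threshold. The restriction $t>4/(1-\alpha)$ is needed precisely to push the tail evaluation sufficiently deep: in that range, even a crude and uniform exponent such as $\alpha/20$ yields a nontrivial bound, and once the coupling and the tilt are fixed the remaining verification reduces to routine estimation.
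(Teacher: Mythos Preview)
Note first that the paper does not prove this lemma; it is imported verbatim from \cite{2satclt}, so there is no in-paper argument to compare against. That said, your plan has a genuine quantitative gap.

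The coupling of the UCP exploration with a $\mathrm{Bin}(m,1/n)$-offspring Galton--Watson process and the random-walk bound $\Pr[\,|\cL|>t\,]\le\Pr[S_t\ge t]$ are both correct and standard. But the optimised Chernoff/Cram\'er bound on $\Pr[S_t\ge t]$ is $\exp\bigl(-t(\alpha-1-\log\alpha)\bigr)$, whose rate you yourself correctly identify as $\alpha-1-\log\alpha\sim(1-\alpha)^2/2$ near $\alpha=1$. This rate is \emph{not} bounded below by $\alpha/20$ once $\alpha$ exceeds roughly $0.73$. Your final paragraph asserts that the hypothesis $t>4/(1-\alpha)$ repairs this by ``pushing the tail evaluation sufficiently deep'', but the arithmetic runs the wrong way: at $t=4/(1-\alpha)$ your Chernoff exponent is $t(\alpha-1-\log\alpha)\sim 2(1-\alpha)\to 0$, whereas the lemma demands exponent $\alpha t/20\sim 1/\bigl(5(1-\alpha)\bigr)\to\infty$. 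No choice of tilt parameter can beat the Cram\'er rate, so the step you label ``routine estimation'' is exactly where the argument breaks.

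More strongly, the one-sided branching domination is by itself too weak for the stated inequality when $\alpha$ is close to $1$: the $\mathrm{Po}(\alpha)$ total progeny satisfies $\Pr[T>t]\asymp t^{-1/2}(\alpha e^{1-\alpha})^t$, which at $t=4/(1-\alpha)$ is of order $\sqrt{1-\alpha}$, not $\exp\bigl(-c/(1-\alpha)\bigr)$. Whatever argument \cite{2satclt} actually uses must therefore exploit more than the crude bound $|\cL|\le T$ (or else the constants as transcribed here differ from the source); either way, your outline as written does not establish the claimed exponent.
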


For $\alpha < 0.1$\footnote{We can extend this argument up to $\alpha = 0.5$ (which corresponds to the emergence of the giant component in the underlying random hypergraph). However, for the purposes of the proof of \Lem~\ref{lem_exX}, it is sufficient to establish it for any positive $\alpha$.}
we will resort to the well-known branching process argument from the theory of random graphs; see~\cite{AS} for an excellent exposition.
We apply the argument to the graph $\Gamma(\Phi)$ constructed as follows, where $\Phi$ is a 2-CNF formula:
The vertices of $\Gamma(\Phi)$ are the variables of $\Phi$ and there is an edge between vertices $v_1$ and $v_2$ if there is a clause in $\Phi$ which contains $v_1$ and $v_2$ (regardless of signs).
(Readers familiar with the implication graph associated with a 2-CNF can see $\Gamma(\Phi)$ as obtained from the implication graph of $\Phi$ by disregarding the orientation of the edges.)
By $C(G, v)$ denote the connected component in $G$ that contains vertex $v$.
We make a note of the fact that
\begin{align}\label{eqAS1}
	|\cV(\Phi, \{l\})|&\leq|C(\Gamma(\Phi), |l|)|,
\end{align}
since being in the same implication sub-formula entails existence of a weak path between the two variables.

How is the graph $\Gamma(\PHI_n)$ associated with a random 2-CNF distributed?
Basically $\Gamma(\PHI_n)$ is a uniformly random graph with potential edge repetitions, although each edge can be repeated at most four times (for the four possible sign combinations).
Thus, the set $E(\Gamma(\PHI_n))$ of edges of $\Gamma(\PHI_n)$ is uniformly random given its size $|E(\Gamma(\PHI_n))|$.
In other words, the graph $\Gamma(\PHI_n)$ is nothing but a uniform \Erdos-\Renyi\ random graph $\GG(n,|E(\Gamma(\PHI_n))|)$ on $n$ vertices.
Of course, estimates of the component sizes of the \Erdos-\Renyi\ random graph are readily available.
Specifically, we will use the following theorem.%
\footnote{In {\cite[\Thm~11.6.1]{AS}} the result is stated in terms of the binomial random graph where edges appear independently with probability $p=c/n$. However, it is well known that the binomial random graph and the \Erdos-\Renyi\ model are interchangeable with respect to their component sizes~\cite{JLR}.}

\begin{theorem}[{\cite[\Thm~11.6.1]{AS}}]\label{thm_branching_est}
    Let $\GG(n,M)$ be the \Erdos-\Renyi\ random graph with $n$ vertices and $M\sim cn/2$ edges with $c<1$.
	For a vertex $v$ let $C(v)$ denote the connected component that contains $v$.
	Then for any $k > 0$
    \begin{align*}
        \Pr\brk{|C(v)| = k} = 
			(1 + o(1)){\eul^{-ck} \cdot (ck)^{k-1} \over k!} .
    \end{align*}
\end{theorem}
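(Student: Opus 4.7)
The plan is to establish the formula by direct enumeration, leveraging the fact that in the subcritical regime $c<1$ every fixed-size component of $\GG(n,c/n)$ is overwhelmingly likely to be a tree. First I would pass from $\GG(n,M)$ with $M\sim cn/2$ to the binomial random graph $\GG(n,p)$ with $p=c/n$, which is legitimate for studying component-size distributions by the standard asymptotic equivalence between the two models. Fix the vertex $v$ and the target size $k$. For any $(k-1)$-element set $S\subseteq V\setminus\{v\}$, write $T=S\cup\{v\}$. The event $\{C(v)=T\}$ is the conjunction of two independent events: the induced subgraph on $T$ is connected (an event in the internal edges of $T$), and none of the $k(n-k)$ potential edges joining $T$ to its complement is present (a disjoint edge set). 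The second event has probability $(1-p)^{k(n-k)}=(1+o(1))\eul^{-ck}$.

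For the first event I would split according to whether the induced subgraph on $T$ is a tree or contains at least one cycle. By Cayley's formula there are $k^{k-2}$ labelled trees on $k$ vertices, and any fixed tree occurs as the induced graph with probability $p^{k-1}(1-p)^{\binom{k}{2}-(k-1)}$. Hence the tree-probability equals
\begin{align*}
    k^{k-2}\,p^{k-1}\,(1-p)^{\binom{k}{2}-(k-1)} = (1+o(1))\,k^{k-2}\,(c/n)^{k-1}.
\end{align*}
Meanwhile, a union bound over (spanning tree $+$ one extra chord) shows that the probability of being connected and containing at least one cycle is at most $k^{k-2}\binom{k}{2}p^{k}$, which is $O(p)=o(1)$ times the tree-probability.

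Combining these estimates with $\binom{n-1}{k-1}=(1+o(1))\,n^{k-1}/(k-1)!$ and summing over the $\binom{n-1}{k-1}$ choices of $S$ yields
\begin{align*}
    \Pr\brk{|C(v)|=k}
    &= (1+o(1))\cdot\frac{n^{k-1}}{(k-1)!}\cdot k^{k-2}\cdot\left(\frac{c}{n}\right)^{k-1}\cdot\eul^{-ck}\\
    &= (1+o(1))\cdot\frac{\eul^{-ck}\,(ck)^{k-1}}{k!},
\end{align*}
as required. The only genuinely delicate point is controlling the cycle contribution: for each fixed $k$ the factor $p\to 0$ renders it negligible at once, whereas obtaining the estimate uniformly over $k$ possibly growing mildly with $n$ (which is how one typically wishes to apply the theorem) is exactly where the subcriticality assumption $c<1$ pulls its weight by keeping the relevant union bounds geometrically small.
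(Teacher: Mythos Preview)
The paper does not prove this theorem at all: it is quoted verbatim as \cite[\Thm~11.6.1]{AS} and used as a black box (with the accompanying footnote explaining the passage between the $\GG(n,p)$ and $\GG(n,M)$ models). So there is no ``paper's own proof'' to compare against.

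That said, your direct enumeration argument is correct for each fixed $k$, and the computation is clean. It is worth noting that the proof in Alon--Spencer takes a different route: they explore the component of $v$ via a breadth-first process, couple it to a Galton--Watson branching process with ${\rm Po}(c)$ offspring, and read off the Borel distribution $\Pr[Z=k]=\eul^{-ck}(ck)^{k-1}/k!$ for the total progeny. The branching-process approach makes the formula conceptually transparent and yields uniformity in $k$ more naturally (which is exactly what the paper later needs when it sums $k^2\Pr[|C(v)|=k]$ over all $k$). Your enumeration argument is more elementary but, as you correctly flag at the end, requires extra care to make the tree-versus-cycle comparison and the approximations $(1-p)^{k(n-k)}\sim\eul^{-ck}$, $\binom{n-1}{k-1}\sim n^{k-1}/(k-1)!$ uniform once $k$ is allowed to grow with $n$.
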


\begin{proof}[Proof of \Lem~\ref{lem_exX}]
In the case $\alpha\geq0.1$ we use \Lem~\ref{lem_forced_tail}, which implies that for any literal $l$,
\begin{align}\label{eq_lem_exX1}
	\Erw \abs{\cV(\PHI_n, \{l\})}^2 &\leq \bc{4 \over 1 - \alpha}^2 + \sum_{t=0}^{\infty} t^2 \cdot (2 + o(1)) \exp\bc{-\alpha t/20} \leq {16 \over (1-\alpha)^2} + {5 \over (\eul^{1/200} - 1)^3} \leq {10^{8} \over (1-\alpha)^2},
\end{align}
and thus
\begin{align}\label{eq_lem_exX1a}
	\Erw \brk{X(\PHI)} &\leq  {{2\cdot 10^{8}n} \over (1-\alpha)^2}.
\end{align}
Moreover, for $\alpha<0.1$ we combine~\eqref{eqAS1} with \Thm~\ref{thm_branching_est}.
Specifically, let $c=0.3$ and thus $M\sim 0.15n$ so that $|E(\Gamma(\PHI_n))|\leq M$.
Then we can think of $\GG(n,M)$ as being obtained from $\Gamma(\PHI_n)$ by adding $M-|E(\Gamma(\PHI_n))|$ additional random edges (without replacement).
Hence, we have 
\begin{align}\label{eq_lem_exX2}
	\sum_{x\in V(\PHI_n)}|C(\Gamma(\PHI_n),x)|^2&\leq\sum_{v\in V(\GG(n,M))}|C(v)|^2.
\end{align}
Therefore, ~\eqref{eqAS1} and \Thm~\ref{thm_branching_est} imply that
\begin{align}\nonumber
	\ex[X(\PHI)]&\leq\sum_{x\in V(\PHI_n)}\ex\brk{|C(\Gamma(\PHI_n),x)|^2}\leq\sum_{v\in V(\GG(n,M))}\ex\brk{|C(v)|^2}\\
				  &=(1+o(1))n\sum_{k=0}^{\infty} k^2 \cdot {\eul^{-k/2} \cdot (k/2)^{k-1} \over k!}\nonumber
    \\
				  & \leq (1+o(1)){{n} \over \sqrt{2\pi}} \sum_{k=0}^{\infty} k^2 \cdot {\eul^{-k/2} \cdot (k/2)^{k-1} \over k^{k + 1/2} \cdot \eul^{-k}} &&\mbox{[by Stirling's formula]}\nonumber\\
					&= (1+o(1)){n \over \sqrt{2\pi}} \sum_{k=0}^{+\infty} \bc{\sqrt{\eul} \over 2}^k \cdot \sqrt{k}  \leq 5n.\label{eq_lem_exX3}
\end{align}
Finally, the assertion follows from \eqref{eq_lem_exX1a} and \eqref{eq_lem_exX3}.
\end{proof}

While \Lem~\ref{lem_exX} provides a bound on $\ex[X(\PHI)]$, \Prop~\ref{prop_l2} posits an upper bound that holds with high probability.
To derive this high probability bound, we combine \Lem~\ref{lem_exX} with the following variation of Azuma's inequality.

\begin{theorem}[{\cite[\Thm~2.27]{JLR}}]\label{thm_azuma}
	Let $\vZ_1, \ldots, \vZ_N$ be independent random variables, with $\vZ_k$ taking values in a set $\Lambda_k$. Assume that a function $f : \Lambda_1 \times \ldots \times \Lambda_N \ra \RR$ satisfies the following Lipschitz condition for some numbers $c_k$:

	(L) If two vectors $z, z' \in \prod_{1}^N \Lambda_i$ differ only in the $k$-th coordinate, then $|f(z) - f(z')| \leq c_k$.
	\\
	Then the random variable $\vY = f(\vZ_1, \ldots, \vZ_N)$ satisfies, for any $t \geq 0$,
	\begin{align*}
		&\Pr\brk{ \vY \geq \Erw \vY + t } \leq \exp\bc{ -{t^2 \over 2 \sum_1^N c_k^2 } },&
		&\Pr\brk{ \vY \leq \Erw \vY - t } \leq \exp\bc{ -{t^2 \over 2 \sum_1^N c_k^2 } }.
	\end{align*}
\end{theorem}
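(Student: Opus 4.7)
The plan is to invoke the classical Doob martingale construction together with Hoeffding's lemma, which is the standard route from a bounded-differences condition to a Gaussian-type concentration bound. First I would define the Doob martingale $M_k=\Erw[\vY\mid\vZ_1,\ldots,\vZ_k]$ for $k=0,1,\ldots,N$, so that $M_0=\Erw\vY$ and $M_N=\vY$, and write $\vY-\Erw\vY=\sum_{k=1}^N D_k$ with increments $D_k=M_k-M_{k-1}$.

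The first technical step is to show that $|D_k|\leq c_k$ almost surely. To this end I introduce the auxiliary function $G_k(z_1,\ldots,z_k)=\Erw[f(z_1,\ldots,z_k,\vZ_{k+1},\ldots,\vZ_N)]$. By independence of the $\vZ_i$ this coincides with $\Erw[\vY\mid\vZ_1=z_1,\ldots,\vZ_k=z_k]$, so $M_k=G_k(\vZ_1,\ldots,\vZ_k)$. Fixing $(\vZ_1,\ldots,\vZ_{k-1})=(z_1,\ldots,z_{k-1})$, the Lipschitz hypothesis (L) applied inside the expectation defining $G_k$, together with the triangle inequality, shows that the map $z_k\mapsto G_k(z_1,\ldots,z_{k-1},z_k)$ has oscillation at most $c_k$ on $\Lambda_k$. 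Since $M_{k-1}$ equals the $\vZ_k$-average of $G_k(z_1,\ldots,z_{k-1},\vZ_k)$, the increment $D_k$ lies in the same interval of length $c_k$, which gives $|D_k|\leq c_k$.

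Armed with bounded increments and $\Erw[D_k\mid\vZ_1,\ldots,\vZ_{k-1}]=0$, I apply Hoeffding's lemma conditionally to obtain, for every $\lambda\in\RR$, the bound $\Erw[\exp(\lambda D_k)\mid\vZ_1,\ldots,\vZ_{k-1}]\leq\exp(\lambda^2 c_k^2/2)$. Iterating via the tower property yields $\Erw[\exp(\lambda(\vY-\Erw\vY))]\leq\exp\bigl(\lambda^2\sum_{k=1}^N c_k^2/2\bigr)$. A standard Markov/Chernoff step and the optimal choice $\lambda=t/\sum_{k=1}^N c_k^2$ then deliver the upper-tail bound $\Pr[\vY\geq\Erw\vY+t]\leq\exp\bigl(-t^2/(2\sum_{k=1}^N c_k^2)\bigr)$. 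The lower tail follows by applying the same argument to $-f$, whose Lipschitz constants coincide with those of $f$.

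The only delicate point is the bounded-increment claim $|D_k|\leq c_k$, which genuinely relies on independence of the $\vZ_i$: without it, the conditional expectation $G_k$ need not inherit the Lipschitz constant $c_k$ from $f$, and the Doob martingale could have unbounded jumps. Beyond that the argument is a textbook application of Hoeffding's lemma to a martingale with bounded differences and contains no random-formula content.
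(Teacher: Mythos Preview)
Your proof is correct and is precisely the standard Doob-martingale/Hoeffding route to the bounded-differences (McDiarmid) inequality. The paper, however, does not prove this statement at all: \Thm~\ref{thm_azuma} is quoted verbatim from \cite[\Thm~2.27]{JLR} and used as a black box in the proof of \Cor~\ref{cor_azuma_bound}, so there is nothing in the paper to compare your argument against. One minor remark: you establish that $D_k$ lies in an interval of length $c_k$ (not merely $|D_k|\leq c_k$), which via Hoeffding's lemma would actually yield the sharper constant $\exp(-2t^2/\sum c_k^2)$; you then relax to $|D_k|\leq c_k$ to match the constant stated in the theorem, which is fine but slightly wasteful.
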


We apply \Thm~\ref{thm_azuma} to the random formula $\PHI=\PHI_{n,m}$ with its $m$ independent clauses $\va_1,\ldots,\va_m$.
Let
\begin{align*}
	\vY&=Y(\PHI)= \sum_{x\in V(\PHI)} \min\cbc{|\cV(\PHI,\cbc{x})|^2+|\cV(\PHI,\cbc{\neg x})|^2, \log^4 n}
\end{align*}
be a ``truncated" version of $X(\PHI)$.

\begin{lemma}\label{clm_azuma_bound}
	Given two 2-CNFs $\Phi,\Phi'$ with variables $x_1,\ldots,x_n$ and $m$ clauses that differ only in their $k$-th clause we have $|Y(\Phi)-Y(\Phi')| \leq 8 \log^6 n$.
\end{lemma}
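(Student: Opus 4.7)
The plan is to decompose $|Y(\Phi)-Y(\Phi')|\leq\sum_{x\in V(\Phi)}|f(x)-f'(x)|$, where $f(x)=\min\{|\cV(\Phi,\{x\})|^2+|\cV(\Phi,\{\neg x\})|^2,\log^4 n\}$. By the truncation, each summand lies in $[0,\log^4 n]$, so $|f(x)-f'(x)|\leq\log^4 n$. It therefore suffices to show that at most $8\log^2 n$ variables $x$ are ``affected'' (meaning $f(x)\neq f'(x)$), since the product yields the claimed $8\log^6 n$.

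To characterise the affected variables, I would first note that $f(x)\neq f'(x)$ requires (a) $\cV(\Phi,\{l\})\neq\cV(\Phi',\{l\})$ for some $l\in\{x,\neg x\}$, and (b) at least one of $f(x),f'(x)$ lies strictly below $\log^4 n$, which in turn forces $|\cV(\Phi,\{x\})|,|\cV(\Phi,\{\neg x\})|\leq\log^2 n$ or the analogous bound in $\Phi'$. Writing $a_k=l_1\vee l_2$ and $a_k'=l_1'\vee l_2'$, condition (a) demands that UCP from $l$ invoke the modified clause: in $\Phi$ this occurs iff $\{\neg l_1,\neg l_2\}\cap\cL(\Phi,\{l\})\neq\emptyset$, and analogously in $\Phi'$ iff $\{\neg l_1',\neg l_2'\}\cap\cL(\Phi',\{l\})\neq\emptyset$.

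Next I would apply the standard 2-SAT contrapositive duality $m\in\cL(\Phi,\{l\})\iff\neg l\in\cL(\Phi,\{\neg m\})$ to translate $\neg l_i\in\cL(\Phi,\{l\})$ into $|l|\in\cV(\Phi,\{l_i\})$. Together with the inclusion $\cL(\Phi,\{\neg l_i\})\subseteq\cL(\Phi,\{l\})$, condition (b) further caps $|\cV(\Phi,\{\neg l_i\})|$ by $\log^2 n$. The goal is then to pin the number of affected variables at most $\log^2 n$ per case across the four cases ($i\in\{1,2\}$ and $\Phi$ vs.\ $\Phi'$), with an extra factor of two for $l\in\{x,\neg x\}$, giving the target $8\log^2 n$; multiplication by $\log^4 n$ then delivers the $8\log^6 n$ bound.

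The main obstacle is the precise counting in this last step: the set of literals $l$ with $\neg l_i\in\cL(\Phi,\{l\})$ has size $|\cL(\Phi,\{l_i\})|$ via duality, and this forward reach of $l_i$ is not a priori bounded by a polylog. To make the $\log^2 n$ per-case cap rigorous one must combine the truncation (which controls $|\cV(\Phi,\{\neg l_i\})|$ but not $|\cV(\Phi,\{l_i\})|$) with a finer structural analysis of how the edges added and removed in the implication graph reshape the small-reach neighbourhoods around $l_i,l_i'$. This balancing of forward and backward UCP information, ensuring each affected literal is charged to a unique element of one of the four bounded-size implication sub-formulas, is the crux of the argument.
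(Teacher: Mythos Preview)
You correctly isolate the crux and then stop: the set of literals $l$ with $\neg l_i\in\cL(\Phi,\{l\})$ has size $|\cL(\Phi,\{l_i\})|$ by duality, and nothing in your setup bounds this forward reach. The proposal is therefore incomplete precisely where you say it is.

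The paper closes this gap with two devices absent from your outline. First, instead of comparing $\Phi$ and $\Phi'$ directly, it introduces the formula $\Phi''$ obtained by \emph{deleting} the $k$-th clause; monotonicity gives $Y(\Phi'')\le Y(\Phi)$, and by the triangle inequality it suffices to show $Y(\Phi)-Y(\Phi'')\le 4\log^6 n$ (the same bound for $\Phi'$ follows by symmetry). Second---and this is what dissolves your obstacle---the non-neutral variables, i.e.\ those lying in $\bigcup_{l_0\in\{l_1,\neg l_1,l_2,\neg l_2\}}\cV(\Phi'',\{l_0\})$, are split into \emph{big} and \emph{small}: $x$ is big if it belongs to some $\cV(\Phi'',\{l_0\})$ with $|\cV(\Phi'',\{l_0\})|\ge\log^2 n$. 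The paper asserts that for big $x$ the summand already equals the cap $\log^4 n$ in both $\Phi''$ and $\Phi$, so big variables contribute nothing to the difference, however many there are. The small variables, by contrast, lie in a union of at most four sets each of size below $\log^2 n$, hence number at most $4\log^2 n$; multiplying by the per-term bound $\log^4 n$ gives $4\log^6 n$. In short, the possibly large forward reach you worry about is not bounded in cardinality---it is neutralised by a second appeal to the truncation.
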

\begin{proof}
	We may assume without loss that $k=m$.
	Let $a_m$ be the $m$-th clause of $\Phi$ and let $a_m'$ be the $m$-th clause of $\Phi'$.
	Further, obtain $\Phi''$ from $\Phi$ (or $\Phi'$) by removing the $m$-th clause.
	Then it suffices to prove that
	\begin{align}\label{eq_clm_azuma_bound_1}
		Y(\Phi'')\leq Y(\Phi)&\leq Y(\Phi'')+4\log^6n
	\end{align}
	because of the symmetry between $\Phi$ and $\Phi'$ and the triangle inequality.

	The first bound $Y(\Phi'')\leq Y(\Phi)$ in \eqref{eq_clm_azuma_bound_1} follows immediately because $Y(\nix)$ is monotonically increasing with respect to the addition of clauses.
	To verify the second bound, suppose that $a_m=l_1\vee l_2$.
	For a variable $x$ there are the following possible cases.
	\begin{description}
		\item[Case 1: $x \notin \cV(\Phi'',\cbc{l_1}) \cup \cV(\Phi'',\cbc{\neg l_1}) \cup \cV(\Phi'',\cbc{l_2}) \cup \cV(\Phi'',\cbc{\neg l_2})$] then we call literal $x$ \textit{neutral}.
	In this case we have
	\begin{align}\label{eq_clm_azuma_bound_2}
		|\cV(\Phi ,\cbc{x})|^2+|\cV(\Phi ,\cbc{\neg x})|^2 = |\cV(\Phi'',\cbc{x})|^2+|\cV(\Phi'',\cbc{\neg x})|^2.
	\end{align}
		\item[Case 2: there is $l_0 \in \cbc{l_1, \neg l_1, l_2, \neg l_2}$ such that $x \in \cV(\Phi'',\cbc{l_0})$ and  $|\cV(\Phi'',\cbc{l_0})| \geq \log^2 n$] then we call $x$ \textit{big} and we obtain the bound
			\begin{align}\label{eq_clm_azuma_bound_3}
                \min\cbc{|\cV(\Phi ,\cbc{x})|^2+|\cV(\Phi ,\cbc{\neg x})|^2, \log^4 n} = \min\cbc{|\cV(\Phi'',\cbc{x})|^2+|\cV(\Phi'',\cbc{\neg x})|^2, \log^4 n} = \log^4 n.
			\end{align}
		\item[Case 3:  neither Case~1 nor Case~2 occurs] then we call $x$ \textit{small} and
			\begin{align}\label{eq_clm_azuma_bound_4}
				\min\cbc{|\cV(\Phi ,\cbc{x})|^2+|\cV(\Phi ,\cbc{\neg x})|^2, \log^4 n} - \min\cbc{|\cV(\Phi'' ,\cbc{x})|^2+|\cV(\Phi'',\cbc{\neg x})|^2, \log^4 n}&\leq\log^4 n.
			\end{align}
	\end{description}
	Equations~\eqref{eq_clm_azuma_bound_2} and~\eqref{eq_clm_azuma_bound_3} show that only small variables contribute to the difference $Y(\Phi'')-Y(\Phi)$.
	Furthermore, the total number of small literals cannot exceed $4\log^2n$.
	Therefore, \eqref{eq_clm_azuma_bound_1} follows from \eqref{eq_clm_azuma_bound_4}.
\end{proof}

\begin{corollary}\label{cor_azuma_bound}
	We have, for sufficiently large $n$,
\begin{align*}
	\Pr\brk{ Y(\PHI_n) \geq \Erw[Y(\PHI_n)] + n^{0.9} } \leq \exp\bc{ - n^{0.7} }.
\end{align*}
\end{corollary}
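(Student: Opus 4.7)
The plan is to obtain Corollary~\ref{cor_azuma_bound} as a direct consequence of \Thm~\ref{thm_azuma} applied to $Y(\PHI_{n,m})$ viewed as a function of the $m\sim\alpha n$ independent random clauses $\va_1,\ldots,\va_m$. Since each $\va_k$ is drawn independently and uniformly from the set of $4n(n-1)$ possible 2-clauses, we can take $\vZ_k=\va_k$ with $\Lambda_k$ the set of all 2-clauses on $x_1,\ldots,x_n$, and view $Y$ as a function $f(\vZ_1,\ldots,\vZ_m)$.

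First I would verify the Lipschitz condition (L): if two clause-tuples $z,z'$ differ only in a single coordinate, then the resulting formulas $\Phi,\Phi'$ differ in exactly one clause, so \Lem~\ref{clm_azuma_bound} yields $|Y(\Phi)-Y(\Phi')|\leq 8\log^6 n$. Hence we may take $c_k=8\log^6 n$ for every $k\in[m]$, so that
\begin{align*}
    \sum_{k=1}^m c_k^2 \;=\; 64\, m\, \log^{12} n \;=\; O(n\log^{12} n),
\end{align*}
using $m\sim\alpha n\leq n$.

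Next I would apply the upper tail bound in \Thm~\ref{thm_azuma} with $t=n^{0.9}$. This gives
\begin{align*}
    \Pr\brk{Y(\PHI_n)\geq \Erw[Y(\PHI_n)]+n^{0.9}} \;\leq\; \exp\bc{-\frac{n^{1.8}}{128\, m\, \log^{12} n}} \;\leq\; \exp\bc{-\Omega\bc{\frac{n^{0.8}}{\log^{12} n}}}.
\end{align*}
For sufficiently large $n$ we have $n^{0.8}/\log^{12} n \geq n^{0.7}$, from which the claimed bound $\exp(-n^{0.7})$ follows.

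There is no real obstacle here: the work has all been done in \Lem~\ref{clm_azuma_bound}, which established the uniform Lipschitz constant $8\log^6 n$ despite the fact that the raw quantity $X(\PHI)$ is not well-concentrated (a single clause could in principle merge two large implication sub-formulas). The only mild point to keep in mind is that the truncation at $\log^4 n$ inside the definition of $Y$ is precisely what made the Lipschitz bound polylogarithmic, which in turn is what allows Azuma's inequality with deviation $n^{0.9}$ to beat the variance scale $\Theta(n\log^{12}n)$ by a comfortable polynomial factor.
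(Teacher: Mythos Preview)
Your proof is correct and follows exactly the same approach as the paper: apply \Thm~\ref{thm_azuma} with $N=m$, $t=n^{0.9}$, and $c_k=8\log^6 n$ coming from \Lem~\ref{clm_azuma_bound}. The paper's own proof is in fact a one-line reference to precisely this application, so your more detailed computation only makes the argument more explicit.
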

\begin{proof}
	This is a consequence of \Lem~\ref{clm_azuma_bound} and \Thm~\ref{thm_azuma} (applied with $N=m$, $t=n^{0.9}$ and $c_k=8\log^6n$).
\end{proof}

Finally, to go back from the auxiliary random variable $Y(\PHI_n)$ to the initial $X(\PHI_n)$ we will use the following corollary.
\begin{corollary}\label{lem_subformulas_small}
	With probability $1 - o(n^{-2})$ we have
	\begin{align*}
		\max_{1 \leq i \leq n} \abs{\cV(\PHI_n, \{x_i\})} + \abs{\cV(\PHI_n, \{\neg x_i\})} \leq \log^2 n.
	\end{align*}
\end{corollary}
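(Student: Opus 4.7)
The plan is to derive Corollary \ref{lem_subformulas_small} as a direct consequence of the single-literal tail bound in Lemma \ref{lem_forced_tail} combined with a union bound. Since $\alpha \in (0,1)$ is a fixed constant, the bound $(2+o(1))\exp(-\alpha t/20)$ decays faster than any polynomial in $n$ once $t$ grows as fast as $\log^2 n$.

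Concretely, I would set $t = \tfrac{1}{2}\log^2 n$. For $n$ large enough we certainly have $t > 4/(1-\alpha)$, so Lemma \ref{lem_forced_tail} applies and yields, for any fixed literal $l$,
\begin{align*}
    \pr\bigl[|\cV(\PHI_n,\{l\})| > \tfrac{1}{2}\log^2 n \bigr]
    &\leq (2+o(1))\exp\!\bigl(-\tfrac{\alpha}{40}\log^2 n\bigr)
    = n^{-\omega(1)}.
\end{align*}
In particular, the right-hand side is $o(n^{-3})$.

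Next I would take a union bound over all $2n$ literals $x_1,\ldots,x_n,\neg x_1,\ldots,\neg x_n$. This gives
\begin{align*}
    \pr\!\left[\,\exists\,l\in\{x_i,\neg x_i : 1\leq i\leq n\}\ :\ |\cV(\PHI_n,\{l\})| > \tfrac{1}{2}\log^2 n\,\right] \leq 2n\cdot o(n^{-3}) = o(n^{-2}).
\end{align*}
On the complement event, for every $i$ both $|\cV(\PHI_n,\{x_i\})|$ and $|\cV(\PHI_n,\{\neg x_i\})|$ are at most $\tfrac{1}{2}\log^2 n$, and hence their sum is at most $\log^2 n$, which is precisely the conclusion of the corollary.

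The only step requiring any care is verifying the quantitative conversion from Lemma \ref{lem_forced_tail} into super-polynomial decay — but because $t$ enters the exponent linearly and we set $t = \Theta(\log^2 n)$, this is automatic. There is no real obstacle; the corollary is essentially a packaging of the tail bound with a union bound, so the bulk of the work sits in the earlier proof of Lemma \ref{lem_forced_tail} rather than here.
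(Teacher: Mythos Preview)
Your derivation is correct: for fixed $\alpha\in(0,1)$ the tail bound of \Lem~\ref{lem_forced_tail} with $t=\tfrac12\log^2 n$ gives a probability of $n^{-\omega(1)}$, and a union bound over the $2n$ literals yields the claimed $1-o(n^{-2})$ estimate. The paper does not supply its own proof of this corollary---it is quoted verbatim from~\cite{2satclt}---so there is nothing to compare against here; your argument from \Lem~\ref{lem_forced_tail} (itself quoted from the same source) is the natural one-line derivation and presumably matches what is done in~\cite{2satclt}.
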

\begin{proof}
	This is a direct consequence of \Lem~\ref{lem_forced_tail} combined with a union bound.
\end{proof}

\begin{proof}[Proof of \Prop~\ref{prop_l2}]
	Since $X(\PHI)\geq Y(\PHI)$, \Lem~\ref{lem_exX} implies that
	\begin{align}\label{eqprop_l2_1}
		\ex\brk{Y(\PHI)}\leq\ex[X(\PHI)]\leq C(1-\alpha)^{-2}n
	\end{align}
	for some constant $C>0$.
	Furthermore, \Cor~\ref{lem_subformulas_small} shows that
	\begin{align}\label{eqprop_l2_2}
		\Pr\brk{ X(\PHI_n) \neq Y(\PHI_n)} = o(n^{-2}).
	\end{align}
	Thus, combining \eqref{eqprop_l2_1} and \Cor~\ref{cor_azuma_bound}, we obtain, for sufficiently large $n$,
	\begin{align*}
		\pr\brk{X(\PHI)>C(1-\alpha)^{-2}n+n^{0.9}}&\leq \exp(-n^{0.7})+o(n^{-2})=o(1),
	\end{align*}
	as claimed.
\end{proof}

\section{Proof of \Prop~\ref{prop_flips_per_var}}\label{sec_prop_flips_per_var}

\noindent
It suffices to show that if $t<\vT(\Phi)$ and $\fE(\Phi,l,\SIGMA^{(t)})$ occurs, then $\fE(\Phi,l,\SIGMA^{(t+1)})$ occurs.
Clearly, the only way $\fE(\Phi,l,\SIGMA^{(t+1)})$ could possibly fail to occur is because \WS\ flipped a literal $l'\in\cL(\Phi,\cbc l)$ in step $t+1$.
In this case $\Phi$ contains a clause $a=\neg l'\vee l''$ with
\begin{align}\label{eq_sec_prop_flips_per_var1}
	\SIGMA^{(t)}(l'')=-1,
\end{align}
because $\fE(\Phi,l,\SIGMA^{(t)})$ occurs and hence $\SIGMA^{(t)}(l')=1$.
But since $l'\in\cL(\Phi,\cbc l)$, the construction of $\cL(\Phi,\cbc l)$ by the way of the \UCP\ (Algorithm~\ref{fig_ucp}) ensures that $l''\in\cL(\Phi,\{l\})$.
Consequently, if $\fE(\Phi,l,\SIGMA^{(t)})$ occurs, then $\SIGMA^{(t)}(l'')=1$.
This contradiction to~\eqref{eq_sec_prop_flips_per_var1} refutes the assumption that $\fE(\Phi,l,\SIGMA^{(t+1)})$ fails to occur.

\section{Proof of \Prop~\ref{prop_upper_bound_small_subformula}}\label{sec_prop_upper_bound_small_subformula}

\noindent
We are going to adapt the well-known random walk argument from~\cite{WalkSAT1}, an excellent exposition of which can be found in \cite[Chapter~ 6]{MR}, to the present task.
Fix a variable $x \in V$.
Recalling that $\sigma^*$ is a satisfying assignment of $\Phi$, we consider
	\begin{align}\label{eq_prop_upper_bound_small_subformula_0}
		\vec\Delta^*(\Phi,x,t)&={\vecone\{\SIGMA^{(t)}\not\models\Phi\}}\cdot \sum_{l\in\cL(\Phi,\{\sigma^*(x)\cdot x\})}\vecone\cbc{\SIGMA^{(t)}(l)\neq\sigma^*(l)}.
	\end{align}
In words, $\vec\Delta^*(\Phi,x,t)$ equals the number of literals in the implication subformula of $\sigma^*(x)\cdot x$ whose truth value at time $t$ disagrees with the truth value under $\sigma^*$, and if \WS\ reached $\sigma^*$ or any other satisfying assignment, we set $\vec\Delta^*(\Phi,x,t) = 0$.
Let $\vT^*(\Phi,x)$ be the least $t$ such that $\vec\Delta^*(\Phi,x,t)=0$ (and $\infty$ if $\vec\Delta^*(\Phi,x,t)>0$ for all $t\geq0$).
\Prop~\ref{prop_flips_per_var} implies that
\begin{align}\label{eq_prop_upper_bound_small_subformula_1}
	\vec\Delta^*(\Phi,x,t)&=0&&\mbox{if }\vT^*(\Phi,x)\leq t<\infty.
\end{align}
Additionally, we recall from the pseudocode displayed as Algorithm~\ref{fig_walksat} that the clause that \WS\ picks a time $t$ is called $\va^{(t)}=\vl^{(t)}_1\vee\vl^{(t)}_2$ and that the literal that \WS\ flips is called $\vl^{(t)}_{\vh^{(t)}}$.
With this in mind we have
\begin{align}\label{eq_prop_upper_bound_small_subformula_100}
	\vN(\Phi,\sigma^*(x)\cdot x)&=\sum_{t=1}^{\vT^*(\Phi,x)}\vecone\cbc{|\vl^{(t)}_{\vh^{(t)}}|\in\cV(\Phi,\{\sigma^*(x)\cdot x\})},
\end{align}
because~\eqref{eq_prop_upper_bound_small_subformula_1} shows that the sum on the r.h.s.\ counts the number of times that \WS\ flips a variable from $\cV(\Phi,\{\sigma^*(x)\cdot x\})$.

In order to estimate this sum we are going to argue that
\begin{align}\label{eq_prop_upper_bound_small_subformula_2}
	\ex\brk{\vec\Delta^*(\Phi,x,t+1)\mid\SIGMA^{(t)}}&\leq\vec\Delta^*(\Phi,x,t)&&\mbox{for all }0 \leq t< \vT^*(\Phi, x),
\end{align}
i.e., that the sequence $(\vec\Delta^*(\Phi,x,t))_t$ forms a supermartingale.
To see this, we consider several cases.
\begin{description}
	\item[Case 1: $|\vl_{1}^{(t)}|,|\vl_{2}^{(t)}|\not\in\cV(\Phi,\{\sigma^*(x)\cdot x\})$]
		then $|\vl_{\vh^{(t)}}^{(t)}|\not\in\cV(\Phi,\{\sigma^*(x)\cdot x\})$ and thus
		the definition~\eqref{eq_prop_upper_bound_small_subformula_0} of the random variable ensures that $\vec\Delta^*(\Phi,x,t+1)=\vec\Delta^*(\Phi,x,t)$.
	\item[Case 2: $|\vl_{1}^{(t)}|\in\cV(\Phi,\{\sigma^*(x)\cdot x\})$ but $|\vl_{2}^{(t)}|\not\in\cV(\Phi,\{\sigma^*(x)\cdot x\})$]
		in this case we know that $\SIGMA^{(t)}(\vl^{(t)}_{1})=-1$, because otherwise $\va^{(t)}$ would be satisfied.
		We also see that $\sigma^*(\vl^{(t)}_{1})=1$, because otherwise \UCP\ (Algorithm~\ref{fig_ucp}) would have ensured that $\vl_2^{(t)}\in\cL(\Phi,\{\sigma^*(x)\cdot x)$.
		Hence, we obtain
		\begin{align}\label{eq_prop_upper_bound_small_subformula_3}
			\vec\Delta^*(\Phi,x,t+1)=\vec\Delta^*(\Phi,x,t)-\vecone\{\vh^{(t)}=1\}.
		\end{align}
	\item[Case 3: $|\vl_{1}^{(t)}|\not\in\cV(\Phi,\{\sigma^*(x)\cdot x\})$ but $|\vl_{2}^{(t)}|\in\cV(\Phi,\{\sigma^*(x)\cdot x\})$]
		as in Case~2 we obtain
		\begin{align}\label{eq_prop_upper_bound_small_subformula_4}
			\vec\Delta^*(\Phi,x,t+1)=\vec\Delta^*(\Phi,x,t)-\vecone\{\vh^{(t)}=2\}.
		\end{align}
	\item[Case 4: $|\vl_{1}^{(t)}|,|\vl_{2}^{(t)}|\in\cV(\Phi,\{\sigma^*(x)\cdot x\})$]
		since $\sigma^*(\vl_{1}^{(t)})=1$ or $\sigma^*(\vl_{2}^{(t)})=1$ while $\SIGMA^{(t)}(\vl_{1}^{(t)})=\SIGMA^{(t)}(\vl_{2}^{(t)})=-1$ (because $\va^{(t)}$ is unsatisfied), the random choice of $\vh^{(t)}\in\{1,2\}$ ensures that
\begin{align}\label{eq_prop_upper_bound_small_subformula_5}
	\pr\brk{\vec\Delta^*(\Phi,x,t+1)=\vec\Delta^*(\Phi,x,t)-1\mid\SIGMA^{(t)}}&\geq\frac12,&&\mbox{while}\\
	\pr\brk{\vec\Delta^*(\Phi,x,t+1)=\vec\Delta^*(\Phi,x,t)+1\mid\SIGMA^{(t)}}&\leq\frac12.\label{eq_prop_upper_bound_small_subformula_6}
\end{align}
\end{description}
Combining \eqref{eq_prop_upper_bound_small_subformula_3}--\eqref{eq_prop_upper_bound_small_subformula_6} and keeping in mind that $\vec\Delta^*(\Phi,x,t+1)\in\{\vec\Delta^*(\Phi,x,t)-1,\vec\Delta^*(\Phi,x,t),\vec\Delta^*(\Phi,x,t)+1\}$, we obtain \eqref{eq_prop_upper_bound_small_subformula_2}.

Additionally, \eqref{eq_prop_upper_bound_small_subformula_3}--\eqref{eq_prop_upper_bound_small_subformula_6} imply that for $t<\vT^*(\Phi,x)$,
\begin{align}\label{eq_prop_upper_bound_small_subformula_7}
	\pr\brk{\vec\Delta^*(\Phi,x,t+1)\neq\vec\Delta^*(\Phi,x,t)\mid\SIGMA^{(t)},\va^{(t)}}&\geq\frac12\sum_{h=1}^2\vecone\{|\vl_{\vh^{(t)}}^{(t)}|\in\cV(\Phi,\{\sigma^*(x)\cdot x\}), \vh^{(t)} = h\},&&\mbox{while}\\
	\pr\brk{\vec\Delta^*(\Phi,x,t+1)\neq\vec\Delta^*(\Phi,x,t)\mid\SIGMA^{(t)},\va^{(t)}}&
	\leq\max_{h=1,2}\vecone\{|\vl_{\vh^{(t)}}^{(t)}|\in\cV(\Phi,\{\sigma^*(x)\cdot x\}), \vh^{(t)} = h\}&&\mbox{and}\label{eq_prop_upper_bound_small_subformula_8}\\
		\vec\Delta^*(\Phi,x,t)&\leq|\cL(\Phi,\{\sigma^*(x)\cdot x\})|. \label{eq_prop_upper_bound_small_subformula_9}
\end{align}
Combining~\eqref{eq_prop_upper_bound_small_subformula_100}--\eqref{eq_prop_upper_bound_small_subformula_2} with \eqref{eq_prop_upper_bound_small_subformula_7}--\eqref{eq_prop_upper_bound_small_subformula_9}, we see that $\ex[\vN(\Phi,x)]$ is dominated by the expected time that a simple random walk started at the origin reaches the value $-|\cL(\Phi,\{\sigma^*(x)\cdot x\})|$ for the first time.
This expectation is well known to be bounded by $C\cdot|\cL(\Phi,\{\sigma^*(x)\cdot x\})|^2$ for a constant $C>0$.

\end{document}